\newcommand{\N}{\mathbb{N}}
\newcommand{\R}{\mathbb{R}}
\newcommand{\LL}{\mathcal{L}}
\newcommand{\Norm}[1]{\left \lVert #1 \right \rVert}
\newcommand{\Abs}[1]{\left \lvert #1 \right \rvert}
\newcommand{\Ceil}[1]{\left \lceil #1 \right \rceil}
\newcommand{\cali}[1]{\mathcal{#1}}
\newcommand{\Normm}{\left \lVert \; \cdot \; \right \rVert}
\newcommand{\set}[1]{\left \{ #1 \right \}}
\newcommand{\st}{\; \middle | \;}
\newcommand{\txt}[1]{\text{#1}}
\newcommand{\lr}[3]{\left #1 #2 \right #3}
\newcommand{\nW}[1]{\nw \left ( #1 \right )}
\newcommand{\op}[1]{\operatorname{#1}}
\newcommand{\func}[5]{
\setlength\arraycolsep{0pt}
#1\colon \begin{array}[t]{ >{\displaystyle}r >{{}}c<{{}}  >{\displaystyle}l } 
          #2 &\longrightarrow & #3 \\ 
          #4 &\longmapsto& #5 
         \end{array}
}
\newtheorem{lemma}{Lemma}[section]
\newtheorem{proposition}[lemma]{Proposition}
\newtheorem{theorem}[lemma]{Theorem}
\newtheorem{corollary}[lemma]{Corollary}
\newtheorem*{theorem**}{Theorem\theoremnum}
\newenvironment{theorem*}[1][]{
  \edef\theoremnum{\if\relax\detokenize{#1}\relax\else~#1\fi}
  \begin{theorem**}
}{
  \end{theorem**}
}
\DeclareMathOperator{\dimin}{dim_{in}}
\DeclareMathOperator{\dimout}{dim_{out}}
\DeclareMathOperator{\ReLU}{ReLU}
\DeclareMathOperator{\nw}{n_W}
\DeclareMathOperator{\Eye}{I}
\theoremstyle{definition}
\newtheorem{remark}[lemma]{Remark}
\newtheorem{hyp}{Hypothesis}
\newtheorem{deff}[lemma]{Definition}
\numberwithin{equation}{section}
\newtheorem*{hyp**}{Hypothesis\theoremnum}
\newenvironment{hyp*}[1][]{
  \edef\theoremnum{\if\relax\detokenize{#1}\relax\else~#1\fi}
  \begin{hyp**}
}{
  \end{hyp**}
}
\title{A theoretical analysis on the resolution of parametric PDEs via Neural Networks designed with Strassen algorithm}
\author{Gonzalo Romera\thanks{Department of Mathematics, University of the Basque Country UPV/EHU, Barrio Sarriena s/n, 48940, Leioa, Spain. E-mail: {\tt gonzalo.romera@ehu.eus}.} \and Jon Asier Bárcena-Pétisco\thanks{\textbf{Contact author.} [1] Department of Mathematics, University of the Basque Country UPV/EHU, Barrio Sarriena s/n, 48940, Leioa, Spain. E-mail: {\tt jonasier.barcena@ehu.eus}. Phone number +34 946015366. [2] BCAM - Basque Center for Applied Mathematics, Spain. E-mail: {\tt jabarcena@bcamath.org}}}
\begin{document}

\maketitle

\noindent
\textbf{Abstract:}

\noindent
We construct a family of Neural Networks that approximate matrix multiplication operator for any activation function such that there exists a Neural Network which can approximate the scalar multiplication function. In particular, we use the Strassen algorithm to bound the number of weights and layers needed for such Neural Networks. This allows us to define another Neural Network for approximating the inverse matrix operator. Finally, we discuss how it can be applied to numerically solve elliptic PDEs.

\vspace{.5cm}
\noindent
\textbf{Key words:} Approximation theory, Neural Networks, Strassen algorithm.

\vspace{.5cm}
\noindent
\textbf{Abbreviated title:} Neural Networks and Strassen algorithm.

\vspace{.5cm}
\noindent
\textbf{AMS subject classification:} 35A35, 35J99, 41A25, 41A46, 65N30, 68T07

\vspace{.5cm}
\noindent
\textbf{Acknowledgement:} Both authors were supported by the Grant PID2023-146764NB-I00 funded by MICIU/AEI/10.13039/501100011033 and cofunded by the European Union. Also, both authors were supported by the grant~IT1875-26 funded by the Basque Government.

\newpage

\section{Introduction}

Machine Learning models have flourished because of their applications in many fields. However, theoretical guaranties have not kept up with this speed. One of the open problems is related to the expressivity of Neural Networks: how many parameters are really needed to solve a given problem? The computational complexity of a Neural Network is completely determined by its number of parameters and activation function. Two relevant examples are the $L^p$ approximation of functions and numerically solving PDEs. In these particular cases, Neural Networks are a reasonable approach for such tasks (specially considering empirical results), but theoretical results usually lack upper bounds on the number of parameters.

Linked to these problems, we focus on this paper on the approximation of matrix multiplication and inversion operators, which are required, among other contexts, in the numerical resolution of PDEs these are essential operations that can then be replaced by a Neural Network.
In this paper, we employ the Strassen algorithm for the matrix product to show that the dependency on the number on the dimension of the matrix can be diminished, and in the use of power series to obtain the inverse of such matrices. This in return gives better upper bounds on the number of weights required in the numerical resolution of parametric elliptic PDEs with Neural Networks.

\subsection{Our Contribution}

Motivated by the paper \cite{kutyniok2022theoretical}, we seek to optimize and generalize Neural Networks capable of approximating the matrix-to-matrix product operator as well as the matrix inversion operator. Our contributions are the following ones:

\begin{itemize}
    \item \textbf{Activation functions.} In Neural Networks, the activation function plays a major role, as it determines its expressivity. In this paper, we focus on activation functions for which there are Neural Networks that approximate the product operation between two real numbers, as mathematically stated in Hypothesis \ref{hyp: product varrho realization approximated}. Clearly, if we cannot approximate such operation, there is no hope of representing the product of matrices, which is then needed to invert matrices. This family of maps contains $\ReLU$, the activation function considered in \cite{kutyniok2022theoretical}.

    \item \textbf{Bounds.} The authors of \cite{kutyniok2022theoretical} bounded the number of parameters needed for such Neural Networks. To achieve more precise bounds, we use two key ideas. First, we use Strassen algorithm for matrix multiplication, which reduces the number of scalar multiplications needed for matrix multiplication. This allows us to economize the use of the potentially bulky Neural Network that approximates the product of scalars. Secondly, we use a finite sum of powers of matrices to approximate the inverses of matrices. Rewriting that sum as in \eqref{eq: Neumann sum as product} below, we reduce the number of matrix multiplications in comparison to \cite{kutyniok2022theoretical}. 
    
\begin{itemize}
    \item  Corollary \ref{cl: MNN Strassen for square matrices} provides a Neural Network that approximates the matrix multiplication operator. We achieve a smaller size of Neural Network than the one in \cite{kutyniok2022theoretical} with a dependency on the number of rows of the input matrices that is no longer cubic, but of the order of $\log_2 7$. 

    \item   Theorem \ref{th: INV MNN} provides a Neural Network that approximates the matrix inverse operator. Again, we achieve a smaller size of Neural Network than the one in \cite{kutyniok2022theoretical} with a dependency on the number of rows of the input matrices that is no longer cubic, but of the order of $\log_2 7$. 

    \item As in \cite{kutyniok2022theoretical}, in Section \ref{sec: Applications}, we see how Theorem \ref{th: INV MNN} is used to numerically solve PDEs with the Galerkin method. The most common application of the Galerkin method, the finite element method, suffers from the \emph{curse of dimensionality}, which we diminish with the exponent $\log_2 7$. 
\end{itemize}

    \item \textbf{Notation.} Finally, we simplify notation by allowing matrices as input of Neural Networks. This avoids the vectorization of matrices, which overloads the notation.
\end{itemize}

\subsection{State of the art}

In this subsection, we discuss the results of the literature related to this paper. The most related work is \cite{kutyniok2022theoretical} since our paper is a generalization and improvement of its results, as we have explained before.

For the rest of the literature review, in Section \ref{subsubsec: Strassen algorithms} we explore the matrix multiplication algorithms which optimize the amount of scalar multiplications. Then, in Section \ref{subsubsec: NN and multiplication} we explore the different interactions involving matrix multiplication, Neural Networks and the Strassen algorithm. We follow in Section \ref{subsubsec: approximation NN} by presenting the approximation theory for Neural Networks, which justifies that the product of scalars can be approximated by Neural Networks with a great variety of activation functions.  Finally, in Section \ref{subsubsec: NN and PDEs} we present the uses that Neural Networks receive to numerically solve PDEs.

\subsubsection{Strassen-Like Algorithms} \label{subsubsec: Strassen algorithms}


In \cite{strassen1969gaussian}, it is shown that for matrices of size $n \times n$, only $\cali{O}(n^{\log_2 7})$ scalar multiplications are required, instead of $\theta(n^{3})$, which is what the definition suggests.

It is an open problem to find the infimum of $\omega$ such that for every $\varepsilon > 0$ there is a $(n \times n) $-matrix multiplication algorithm with $\cali{O}(n^{\omega + \varepsilon})$ arithmetic operations. For that purpose a number of more optimal algorithms than Strassen appeared. We would like to highlight two of them: \cite{COPPERSMITH1990251}, which describes an algorithm that was not outperformed for 20 years, and \cite{alman2024asymmetryyieldsfastermatrix}, which is the latest and lowest upper bound. Both use the laser method introduced by Strassen in \cite{STRASSEN+1987+406+443} that relates trilinear polynomials to the matrix multiplication tensor. This method, however, has a limitation: \cite{ambainis2015fast} shows that using that method and a variety of its generalizations cannot obtain the bound for $\omega = 2$, which is a reasonable target.

Even if those papers present better algorithms than the one in \cite{strassen1969gaussian}, as the structure of the product is much more complicated or unknown, we leave it for future work.

\subsubsection{Neural Networks and Multiplication} \label{subsubsec: NN and multiplication}

 
For scalar multiplication, in \cite{petersen2018optimal, yarotsky2017error}  a $\ReLU$ Neural Network capable of approximating the product of any pair of scalars in an interval is described. Then, using that Neural Network as building block, the authors of \cite{kutyniok2022theoretical} construct Neural Networks with $\ReLU$ as activation function which approximates the matrix multiplication and inverse operators. This is, as far as we know, the first explicitly constructed $\ReLU$ Neural Network to approximate matrix multiplication or its inverse with upper bounds on the number of weights and layers. For other activation functions like $\varrho(x) = x^2$ and $\ReLU^2$, those results are obvious while others, even if it is known to exist, there are upper bounds on the number of parameters for a Neural Network that approximates scalar multiplication operator which are non sharp and general bounds given by approximation theorems, like in \cite{mhaskar1996neural}.


On the reciprocal problem of this work, Strassen-type matrix multiplication algorithms have been obtained using Neural Networks, by \cite{elser2016network, fawzi2022discovering} among others. Additional examples of those algorithms can be found in the review \cite{mansour2024review}. However, it should be noted that the algorithms mentioned in Section \ref{subsubsec: Strassen algorithms} are more efficient asymptotically. 


Optimization of evaluations and training of Neural Networks and other related architectures of Machine Learning is a huge and ever-evolving research topic. Among many other algorithms, the Strassen-type matrix multiplication algorithms are used in this context. This has been done from a purely computational perspective, for example, in \cite{ali2020reduction, rao2018winograd,zhao2018faster}. We are unaware of theoretical analysis of this kind of algorithms involving Strassen-type matrix multiplications.

\subsubsection{Approximation Theory for Neural Networks} \label{subsubsec: approximation NN}

Approximation theory is critical to justify the success of Neural Networks. It studies its density and convergence to the desired function with the objective of answering when a Neural Network is suited for a given job.


The first results in approximation theory for Neural Networks are density theorems over $C^0$ and $L^p$ spaces, results known as Universal Approximation Theorems. The seminal work is \cite{cybenko1989approximation}, which proved the density of one layer Neural Networks with sigmoidal activation functions in the space of continuous functions equipped with the supremum norm. Generalizations followed, such as \cite{hornik1989multilayer}, where a more general set of maps containing Neural Networks is proved to be dense in $C^0$, and such as \cite{leshno1993multilayer}, where the authors characterize for what activation functions the set of Neural Networks is dense in $L^p$ with $p \in (1, \infty)$. Density results are not limited to classic Lebesgue spaces. In fact, the author of \cite{hornik1991approximation} gives conditions on the activation function ($m$ times continuously derivable and bounded) for the density in Sobolev spaces over compact and finite measures. As for Besov spaces, its relations with the approximation classes of Neural Network is reflected in \cite{gribonval2022approximation}. However, in these papers, there is no computation on the number of weights required to approximate a given function.


In order to give upper bounds for the width and depth of Neural Network, some smoothness of the function is usually required. Examples of this are given in the papers \cite{barron1993universal, mhaskar1996neural, yarotsky2017error, petersen2018optimal, hanin2019universal, guhring2020error, opschoor2020deep, guhring2021approximation, siegel2023optimal}. The pioneer is \cite{barron1993universal}, which gives a relation between the number of weights of a one layer Neural Network and its proximity in $L^2$ norm to functions with an integrability condition on its Fourier Transform. For \cite{mhaskar1996neural,siegel2023optimal,yarotsky2017error}, the $L^p$ proximity of functions in $W^{s, p}$ with Neural Networks is expressed in terms of the number of weights and layers, and the Sobolev norm. In \cite{opschoor2020deep}, the authors do the same for ReLU Neural Networks but for the Besov spaces and it is a sharp bound. For $L^p$ norms, the authors of \cite{petersen2018optimal} show the proximity of continuously differentiable to Neural Networks while tracking the necessary number of parameters. Moreover, given a continuous function, in \cite{han2017deep} it is shown that there exists a sequence of Neural Networks with known number of parameters converging in the supreme norm to that function. In addition, in \cite{guhring2020error, guhring2021approximation}, the authors provide upper bounds on the number of weights and layers to approximate in the Sobolev space $W^{s, p}$ by Neural Networks a target function which belongs to $W^{n, p}$, for $n > s$. Finally, the survey \cite{Guhring_Raslan_Kutyniok_2022} provides a general overview of this topic.


From a more computational perspective, the paper \cite{du2019gradient} proves that the most widely used algorithm for training Neural Networks, gradient descent, achieves zero $\ell^2$ regression error as long as the activation function is smooth and Lipschitz. On the same topic, \cite{allen2019convergence} guarantees, for the same algorithm and for sufficiently large Neural Networks and more general loss functions, convergence in the same sense, as well as providing estimates on its time cost.


Regarding the approximation of operators rather than functions, in recent years two important frameworks have appear: \textit{DeepONet} and \textit{Neural Operators}. The first one was introduced in \cite{lu2019deeponet} as an extension of the seminar work \cite{chen1995universal}. Both prove an universal approximation theorem for continuous operators. One year after its introduction, the results in \cite{lanthaler2022errorestimatesdeeponetsdeep} extends the results to $L^2$ operators. For similar structures, continuous operators between separable Hilbert spaces are point-wise approximated by Operator nets, as proved in \cite{schwab2023deep} and in \cite{herrmann2024neural} with convergence rates in a particular subset. Very recently, the study of set-valued maps was carried out in \cite{garciauniversal}.
For Neural Operators, their first appearance is attributed to the paper \cite{li2020neural}, where empirical tests are performed to solve some PDEs. In order to reduce the computational cost of evaluating these architectures, some of these authors consider a particular implementation in \cite{li2020fourier} where the Fourier Transformed is applied, giving us \textit{Fourier Neural Operators}. This last, according to \cite{kovachki2021universal}, satisfy the universal approximation properties for continuous maps between Sobolev spaces in the torus.

\subsubsection{Neural Networks and PDEs} \label{subsubsec: NN and PDEs}


Machine Learning techniques for numerically solving PDEs are a growing research field. The most relevant of this trend is the Physics Informed Neural Networks introduced in \cite{raissi2019physics,sirignano2018dgm}. A Physics Informed Neural Network is a Neural Network that approximates the solution of a PDE by minimizing a loss function derived from the governing PDE. Other examples of using Neural Networks that are not Physics Informed Neural Networks to approximate numerically solutions of specific PDEs are in \cite{han2017deep, hutzenthaler2020proof}.

More recently, the aim of architectures has shifted from approximating particular solutions of a PDE to approximating its solver operator. As the experiments suggest in \cite{lu2019deeponet} and \cite{li2020neural}, DeepONets and Neural Operators respectively learn the solution maps of some PDEs. Also, in \cite{kovachki2021universal} Fourier Neural Operators are used to numerically solve some PDE problems. Concerning DeepONets, in \cite{lanthaler2022errorestimatesdeeponetsdeep} a number of PDEs are listed in which the curse of dimensionality is avoided by using DeepONets. 


There is also a clash between the classical algorithms for numerically solving PDEs with this new wave of Machine Learning based algorithms. This results in mixing the two approaches to mitigate the computational costs of both techniques. Such classical algorithms are the Galerkin method and the Finite Element Method. Combining those methods, which require an appropriate selection of functions, with Neural Networks, we get results such as those in \cite{he2018relu, lee2020model, dong2021local, zhang2021hierarchical, geist2021numerical}. The idea is to replace the hard-to-find functions that form a basis by a trainable family of Neural Networks. 
Other ideas are exploited in \cite{kutyniok2022theoretical} with the aim of providing a mathematical understanding instead of computational efficiency. Using related ideas, in \cite{JMLR:v24:23-0421}, they discretize the space of solutions of stationary linear diffusion equation by means of Finite Elements, and, supported by algorithms for solving the resulting linear system, they develop multilevel Neural Networks to approximate the solutions of those PDEs. Finally, mixing the two approaches is not limited either to Neural Networks. For example, the authors of \cite{gao2022physics} propose Graph Convolutional Networks for this task.

\subsection{Outline}

The outline of the paper is the following: in Section \ref{sec: Preliminaries}, we fix the notation, describe the Strassen Algorithm, and define Matrix Neural Networks and related concepts. In Section \ref{sec: Strassen Neural Networks}, we construct Matrix Neural Networks that approximate the matrix multiplication operator. In Section \ref{sec: Strassen Matrix Multiplication for Inverting Matrices}, we construct a Matrix Neural Network that approximates the inverse of a set of matrices. Finally, in Section \ref{sec: Applications} we explain the use of the previous Matrix Neural Network for numerically solving PDEs.

\section{Preliminaries} \label{sec: Preliminaries}

\subsection{Linear Algebra}

In this paper, we denote the spaces of matrices of $n_1$ rows and $n_2$ columns by $\R^{n_1 \times n_2}$. Given $A \in \R^{n_1 \times n_2}$, $i \in \set{1, \dots, n_1}$ and $j \in \set{1, \dots, n_2}$, $A_{i,j}$ denotes the entry in its $i$-th row and $j$-th column. Moreover, given $c_1, c_2 \in \set{1, \dots, n_1}$ such that $c_1 \leq c_2$ and $c_3, c_4 \in \set{1, \dots, n_2}$ such that $c_3 \leq c_4$ we denote by $A_{c_1:c_2, c_3: c_4}$ its submatrix formed by the rows from $c_1$ to $c_2$ (including $c_1$ and $c_2$), and the columns from $c_3$ to $c_4$ (including $c_3$ and $c_4$). We also denote by $\Eye_n \in \R^{n \times n}$ the identity matrix whose entries are $(\Eye_n)_{i, j} = \delta_{i, j}$ for all $i, j \in \set{1, \dots, n}$. 

Given two matrices $A \in \R^{n_1 \times n_2}$ and $B \in \R^{n_3 \times n_4}$ we write $A|B \in \R^{n_1 \times(n_2 + n_4)}$ when $n_1 = n_3$ for the matrices whose columns are those of $A$ and $B$ in that order. Similarly, for rows, we use $\begin{pmatrix} A \\ B \end{pmatrix}$ whenever $n_2 = n_4$.

In addition, we consider the following notation that is especially useful for describing the Strassen algorithm:

\begin{deff} \label{def: matrix split in four}
    Let $A \in \R^{2^{k + 1} \times 2^{k + 1}}$ for some $k \in \N \cup \set{0}$. Then, we denote:
    \begin{align*}
        A[1, 1] &\coloneq A_{1: 2^k, 1: 2^k}, & A[1, 2] &\coloneq A_{1: 2^k, 2^k + 1: 2^{k + 1}}, \\
        A[2, 1] &\coloneq A_{2^k + 1: 2^{k + 1}, 1: 2^k}, & A[2, 2] &\coloneq A_{2^k + 1: 2^{k + 1}, 2^k + 1: 2^{k + 1}}.
    \end{align*}
    Note that we have:
    \[
    A = \begin{pmatrix}
        A[1, 1] | A[1, 2] \\
        A[2, 1] | A[2, 2]
    \end{pmatrix}.
    \]
\end{deff}

Throughout this paper, we consider linear functionals between spaces of matrices. In particular, we consider tensors located in $\R^{(n_1 \times n_2) \times (n_3 \times n_4)}$. We recall that, given $\LL \in \R^{(n_1 \times n_2) \times (n_3 \times n_4)}$ and $A \in \R^{n_3 \times n_4}$, the product $\LL A$ is the matrix in $\R^{n_1 \times n_2}$ given by:
\[
(\LL A)_{i,j} = \sum_{l= 1}^{n_4} \sum_{k = 1}^{n_3} \LL_{i, j, k, l} A_{k,l}.
\]
In particular, when $n_1\times n_2=n_3\times n_4$, we define the identity tensor $\op{id}$, whose entries are:
\[(\op{id})_{i,j}=\delta_{i\times j,k\times l}.\]
\paragraph{}
An important notion when working with Neural Network is the notion of weights; that is, of non null terms. For this purpose, we define  for all $A \in \R^{n_1 \times n_2}$ its number of weights:
\[
\nW{A} = \# \set{(i, j) \in \set{1, \dots, n_1} \times \set{1, \dots, n_2} \st A_{i, j} \neq 0}.
\]
Similarly, we extend the notion to vectors and tensors.

Finally, we denote by $\Abs{\;\cdot\;}_p$ the usual $p$ norm for vectors and matrices (seen as vectors) and by $\Normm_p$ the matrix norm induced by $\Abs{\;\cdot\;}_p$.

In particular, we need the following classical estimates between  norms:
\begin{lemma}
Let $A\in \R^{n \times n}$. Then,
   \begin{equation} \label{eq: infinity and 2 norm equivalence constants}
        \Abs{A}_\infty \leq \Norm{A}_2 \leq n \Abs{A}_\infty.
    \end{equation}
\end{lemma}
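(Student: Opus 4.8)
The plan is to prove the two inequalities separately, being careful that here $\Abs{\;\cdot\;}_\infty$ denotes the $\infty$-norm of $A$ regarded as a flattened vector, i.e. $\Abs{A}_\infty = \max_{i,j} \Abs{A_{i,j}}$, whereas $\Norm{A}_2$ is the operator norm induced by the Euclidean vector norm (the spectral norm). Both bounds are elementary, so the real content is just organizing the standard estimates; I expect no genuine obstacle, only the mild bookkeeping of keeping the two meanings of the subscript $\infty$ straight.

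For the lower bound $\Abs{A}_\infty \leq \Norm{A}_2$, I would fix indices $i,j$ achieving the maximum entry in absolute value and test the operator norm against standard basis vectors: writing $e_i, e_j$ for the $i$-th and $j$-th canonical basis vectors, one has $\Abs{A_{i,j}} = \Abs{e_i^\top A e_j} \leq \Abs{e_i}_2 \, \Abs{A e_j}_2 \leq \Norm{A}_2 \, \Abs{e_j}_2 = \Norm{A}_2$, using Cauchy--Schwarz and the definition of the induced norm together with $\Abs{e_i}_2 = \Abs{e_j}_2 = 1$. Since this holds for the maximizing pair $(i,j)$, we get $\Abs{A}_\infty \leq \Norm{A}_2$.

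For the upper bound $\Norm{A}_2 \leq n \Abs{A}_\infty$, I would pass through the Frobenius norm: for any unit vector $x \in \R^n$, $\Abs{Ax}_2^2 = \sum_i \big(\sum_j A_{i,j} x_j\big)^2 \leq \sum_i \big(\sum_j A_{i,j}^2\big)\big(\sum_j x_j^2\big) = \sum_{i,j} A_{i,j}^2$ by Cauchy--Schwarz, so $\Norm{A}_2 \leq \big(\sum_{i,j} A_{i,j}^2\big)^{1/2}$. Bounding each of the $n^2$ entries by $\Abs{A}_\infty$ gives $\Norm{A}_2 \leq \big(n^2 \Abs{A}_\infty^2\big)^{1/2} = n \Abs{A}_\infty$, which is the claimed inequality. (Alternatively one could invoke $\Norm{A}_2 \leq \sqrt{\Norm{A}_1 \Norm{A}_\infty}$ with the max-column-sum and max-row-sum norms, each at most $n\Abs{A}_\infty$; this yields the same constant.) Combining the two displays establishes \eqref{eq: infinity and 2 norm equivalence constants}.
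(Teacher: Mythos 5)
Your proof is correct. The paper itself states this lemma without proof, treating it as a standard norm-equivalence fact, so there is no proof to compare against; your argument (lower bound via testing against canonical basis vectors with Cauchy--Schwarz, upper bound via the Frobenius norm and the entrywise bound) is the natural, elementary route and matches what the authors evidently had in mind. One tiny remark on bookkeeping: in the lower-bound step, the chain $\Abs{A_{i,j}} = \Abs{e_i^\top A e_j} \leq \Abs{e_i}_2\,\Abs{Ae_j}_2$ already suffices without invoking the second factor $\Abs{e_j}_2$ separately, since $\Abs{Ae_j}_2 \leq \Norm{A}_2\Abs{e_j}_2 = \Norm{A}_2$ folds both uses of the unit-norm basis vectors together; your write-up is fine, just slightly redundant in phrasing.
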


\subsection{Strassen Algorithm for Square Matrices} \label{subsec: strassen for any square matrix}

In this section, we recall the Strassen algorithm, which was proposed in \cite{strassen1969gaussian}. First, we define the Strassen algorithm in matrices belonging to $\R^{2^k \times 2^k}$. Then, we define it in matrices in $\R^{n \times n}$ for arbitrary values $n \in \N$.

Let us now recursively define the Strassen algorithm for the matrices in $\R^{2^k \times 2^k}$. The base case $k = 0$ simply consists of multiplications of scalars. The inductive case, $k\geq1$, is defined as follows:
\begin{equation} \label{align: product of matrices as seven products}
\begin{aligned}
    (AB)[1, 1] &= P_1 + P_4 - P_5 + P_7, & (AB)[1, 2] &= P_3 + P_5, \\
    (AB)[2, 1] &= P_2 + P_4, & (AB)[2, 2] &= P_1  - P_2 + P_3 + P_6,
\end{aligned}
\end{equation}
for:
\begin{equation} \label{align: Strassen products}
\begin{split}
    P_1 &= (A[1, 1] + A[2, 2]) (B[1, 1] + B[2, 2]), \\
    P_2 &= (A[2, 1] + A[2, 2]) B[1, 1], \\
    P_3 &= A[1, 1] (B[1, 2] - B[2, 2]), \\
    P_4 &= A[2, 2] (B[2, 1] - B[1, 1]), \\
    P_5 &= (A[1, 1] + A[1, 2]) B[2, 2], \\
    P_6 &= (A[2, 1] - A[1, 1]) (B[1, 1] + B[1, 2]), \\
    P_7 &= (A[1, 2] - A[2, 2]) (B[2, 1] + B[2, 2]),
\end{split}
\end{equation}
where all the multiplications in \eqref{align: Strassen products} consist of the Strassen multiplication in $\mathbb R^{2^{k-1}\times 2^{k-1}}$.

When $n \neq 2^k$, the Strassen algorithm can be adapted in different ways:
\begin{enumerate}
    \item We extend the matrix from $n \times n$ to $2^k \times 2^k$, for $k = \Ceil{\log_2(n)}$ as follows:
    \[
    A \mapsto \begin{bmatrix}
        A & 0 \\ 0 & 0
    \end{bmatrix}.
    \]
    It is then enough to apply the Strassen multiplication to this extended matrix and recover the convenient submatrix.
    
    \item We consider a division of blocks of size $2^k \times 2^k$, perform the multiplication with such blocks, and do the corresponding additions. For example, if $n = 6$ and $A, B \in \R^{6 \times 6}$,  we can express the product $AB$ as a sum of products of submatrices where we can apply the Strassen algorithm:
    \begin{align*}
        (AB)_{1: 4, 1: 4} & = A_{1: 4, 1: 4} B_{1: 4, 1: 4} + \begin{pmatrix} A_{1: 2, 5: 6} B_{5: 6, 1: 2} & A_{1: 2, 5: 6} B_{5: 6, 3: 4} \\ A_{3: 4, 5: 6} B_{5: 6, 1: 2} & A_{3:4, 5: 6} B_{5: 6, 3: 4} \end{pmatrix}, \\
        (AB)_{1: 2, 5: 6} & = A_{1: 2, 1: 2} B_{1: 2, 5: 6} + A_{1: 2, 3: 4} B_{3: 4, 5: 6} + A_{1: 2, 5: 6} B_{5: 6, 5: 6}, \\
        (AB)_{3: 4, 5: 6} & = A_{3: 4, 1: 2} B_{1: 2, 5: 6} + A_{3: 4, 3: 4} B_{3: 4, 5: 6} + A_{3: 4, 5: 6} B_{5: 6, 5: 6}, \\
        (AB)_{5: 6, 1: 2} & = A_{5: 6, 1: 2} B_{1: 2, 1: 2} + A_{5: 6, 3: 4} B_{3: 4, 1: 2} + A_{5: 6, 5: 6} B_{5: 6, 1: 2}, \\
        (AB)_{5: 6, 3: 4} & = A_{5: 6, 1: 2} B_{1: 2, 3: 4} + A_{5: 6, 3: 4} B_{3: 4, 3: 4} + A_{5: 6, 5: 6} B_{5: 6, 3: 4}, \\
        (AB)_{5: 6, 5: 6} & = A_{5: 6, 1: 2} B_{1: 2, 5: 6} + A_{5: 6, 3: 4} B_{3: 4, 5: 6} + A_{5: 6, 5: 6} B_{5: 6, 5: 6}.
    \end{align*}
\end{enumerate}

\subsection{Matrix Neural Networks}

We now define the notions of Matrix Neural Network, concatenation, and parallelization. This is based on the definition introduced in \cite{petersen2018optimal} and \cite{gribonval2022approximation}, but adapted for Neural Networks whose entries are matrices.
We do this for the sake of simplifying the notation.

\begin{deff} \label{def: MNN}
    Let $\varrho: \R \longrightarrow \R$. A $\varrho$-\emph{Matrix Neural Network} ($\varrho$-MNN from now on) is a tuple $((\LL^1, C^1, \alpha^1), \dots, (\LL^L, C^L, \alpha^L))$ where each $\LL^\ell$ is a $(N^{\ell, 1} \times N^{\ell, 2}) \times (N^{\ell - 1, 1} \times N^{\ell - 1, 2})$ tensor, $C^\ell$ is a $N^{\ell,1} \times N^{\ell, 2}$ matrix and
    \[
    \func{\alpha^\ell}{\R^{N^{\ell, 1} \times N^{\ell, 2}}}{\R^{N^{\ell, 1} \times N^{\ell, 2}}}{(A_{i, j})}{(\alpha^\ell_{i, j}(A_{i, j}))}
    \]
    satisfying $\alpha^\ell_{i, j} \in \set{\op{id}, \varrho}$ if $\ell < L$ and $\alpha^L = \op{id}$.
\end{deff}

\begin{remark}
Multilayer Perceptron (if we consider them defined as in \cite{petersen2018optimal}) have the same structure as a Matrix Neural Networks: they are a sequence of triples of matrices, vectors, and activation functions. Their realization, number of layers, and number of weights are defined equivalently for these Neural Networks. In particular, any Multilayer Perceptron can be viewed as a Matrix Neural Network by considering $N^{\ell, 2} = 1$ for all $\ell = 1, \dots, N$. Conversely, using vectorization procedures, since the matrix space $\R^{n \times m}$ is isomorphic to the vector space $\R^{nm}$, all the Matrix Neural Networks can be implemented as classical Neural Networks (with the same length, number of weights, and final error). Therefore, we can view both notions as equivalent.
\end{remark}

We can directly extend for a $\varrho$-MNN $\Phi$ the definitions given in \cite{kutyniok2022theoretical} of its \emph{input} and \emph{output dimensions} $\dimin(\Phi), \dimout(\Phi)$, \emph{number of layers} $L(\Phi)$, \emph{number of weights at the layer} $\ell$ $M_\ell(\Phi)$, \emph{number of weights} $M(\Phi)$ and \emph{realization} $R(\Phi)$. We omit their formal definitions for the sake of brevity as they are standard notions.

For more than one $\varrho$-MNN $\Phi^1, \dots, \Phi^k$, we can again extend the ideas of their \emph{sparse concatenation} $\Phi^1 \odot \Phi^2$ and their \emph{parallelization} $\cali{P}(\Phi^1, \dots, \Phi^k)$ as defined in, for example, \cite{petersen2018optimal} and \cite{kutyniok2022theoretical}. In these papers, they focused in the activation function $\ReLU$ and these definitions are written with that in mind, but, allowing components of the activation functions to be the identity solves this technicalities. The whole point of introducing these concepts is that they satisfy the following:
\[
R(\Phi^1 \odot \Phi^2) = R(\Phi^1) \circ R(\Phi^2)
\]
and
\[
R(\cali{P}(\Phi^1, \dots, \Phi^k)) \begin{pmatrix}
    A_1 \\ \vdots \\ A_k
\end{pmatrix} = \begin{pmatrix}
    R(\Phi_1)(A_1) \\ \vdots \\ R(\Phi_k)(A_k)
\end{pmatrix},
\]
while keeping track in its number of weights and layers.

In order to implement the Strassen algorithm with $\varrho$-MNN,  we suppose that we may multiply two numbers using Neural Networks:
\begin{hyp} \label{hyp: product varrho realization approximated}
    Let $\varrho$ be an activation function. Then, for all $K, \varepsilon > 0$ there is a $\varrho$-MNN $\Pi_{\varepsilon, K}^\varrho$ such that:
    \[
    \sup_{(x, y) \in [- K, K]^2} \Abs{xy - R(\Pi_{\varepsilon, K}^\varrho)(x, y)} \leq \varepsilon.
    \]
\end{hyp}

This is easily satisfied by the multiple versions of the Universal Approximation Theorem, for example, that in \cite{cybenko1989approximation}. Actually, Hypothesis \ref{hyp: product varrho realization approximated} is equivalent to these results: the product approximation unlocks the polynomial approximation and therefore, the approximation of continuous functions.

\begin{remark}
    An easy example is $\varrho = \ReLU^2$ where we can compute the product exactly. This is proven by just noticing that $\varrho(t) + \varrho(-t)=t^2$.
\end{remark}

\begin{remark}
    For the particular case $\varrho = \ReLU = t \mapsto\max\set{t,0}$, $\Pi_{\varepsilon, K}^{\ReLU}$ is constructed in \cite{elbrachter2022dnn}, which, by a minor modification using the equality
    \[
    4xy = (x+y)^2 - (x-y)^2
    \]
    instead of 
    \[
    2xy = (x+y)^2 -(x^2+y^2),
    \]
    we can slightly improve the upper bounds on the number of weights:
    \begin{equation}\label{item: product weights*}
        M \lr{(}{\Pi_{\varepsilon, K}^{\ReLU}}{)} \leq \begin{cases} 30 \log_2 K + 15 \log_2 \frac{1}{\varepsilon} + 25 & \txt{ if } \varepsilon < \frac{K^2}{2}, \\ 12 & \txt{ if } \varepsilon \in \left [ \frac{K^2}{2}, K^2 \right ), \\ 0 & \txt{ if } \varepsilon \geq K^2, \end{cases}
    \end{equation}
    and on the number of layers:
    \begin{equation}\label{item: product layers*}
        L\lr{(}{\Pi_{\varepsilon, K}^{\ReLU}}{)} \leq \begin{cases} \log_2 K + \frac{1}{2} \log_2 \frac{1}{\varepsilon} + \frac{5}{2} & \txt{ if } \varepsilon < \frac{K^2}{2}, \\
        2 & \txt{ if } \varepsilon \in \left [ \frac{K^2}{2}, K^2 \right ), \\ 1 & \txt{ if } \varepsilon \geq K^2. \end{cases}
    \end{equation}
    
\end{remark}


\section{Matrix Multiplication Approximation}\label{sec: Strassen Neural Networks}

We proceed to define the Strassen MNN to approximate the multiplication of matrices in $\R^{2^k \times 2^k}$ assuming Hypothesis \ref{hyp: product varrho realization approximated}. In particular, we consider Neural Networks that are defined recursively on $k$:

The base case is given by:
\begin{equation}\label{eq:Phibase}
R \left ( \Phi^{\op{STR}_{2^0, \varrho}}_{\varepsilon, K} \right )(A|B) = R \left ( \Pi_{\varepsilon, K}^{\varrho} \right )(A_{1, 1}, B_{1, 1}),
\end{equation}
for $\Pi_{\varepsilon, K}^{\varrho}$ defined in Hypothesis \ref{hyp: product varrho realization approximated}.

For the recursive case, we need to define several auxiliary networks. The first, whose input is in $\R^{(7 \cdot 2^{k - 1}) \times 2^{k - 1}}$ and whose output is in  $\R^{2^k \times 2^k}$, is given by:
\begin{equation*}
R\left(\Phi^{\op{MIX}_k}\right) \begin{bmatrix} P_1 \\ P_2 \\ P_3 \\ P_4 \\ P_5 \\ P_6 \\ P_7 \end{bmatrix} = \begin{pmatrix} P_1 + P_4 - P_5 + P_7 & P_3 + P_5 \\ P_2 + P_4 & P_1 - P_2 + P_3 + P_6 \end{pmatrix}.
\end{equation*}
Note that $\Phi^{\op{MIX}_k}$ is a one layer MNN such that:
\begin{equation} \label{eq: number of weights mix 1}
M\left (\Phi^{\op{MIX}_k} \right ) = 12 \cdot (2^{k - 1})^2 = 3 \cdot 4^k.
\end{equation}
Also, with the notation of Definition \ref{def: matrix split in four}, we define a MNN whose input is in $\R^{2^k \times 2^{k + 1}}$ and whose output is in $\R^{7 \cdot 2^{k - 1} \times 2 \cdot 2^{k - 1}}$:
\[
R(\Phi^{\op{SPLIT}_k})(A|B) = \begin{pmatrix} A[1, 1] + A[2, 2] & | & B[1, 1] + B[2, 2] \\ A[2, 1] + A[2, 2] & | & B[1, 1] \\ A[1, 1] & | & B[1, 2] - B[2, 2] \\ A[2, 2] & | & B[2, 1] - B[1, 1] \\ A[1, 1] + A[1, 2] & | & B[2, 2] \\ A[2, 1] - A[1, 1] & | & B[1, 1] + B[1, 2] \\ A[1, 2] - A[2, 2] & | & B[2, 1] + B[2 ,2] \end{pmatrix}.
\]
This network satisfies:
\begin{equation} \label{eq: number of weights SPLIT 1}
    M(\Phi^{\op{SPLIT}_k}) = 24 \cdot (2^{k - 1})^2 = 6 \cdot 4^k.
\end{equation}
Finally, we define a MNN whose input is in $\R^{7 \cdot 2^{k - 1} \times 2 \cdot 2^{k - 1}}$ and whose output is in $\R^{7 \cdot 2^{k - 1} \times 2^{k - 1}}$:
\begin{equation*}
    \Phi^{\op{PAR}_{k, \varrho}}_{\varepsilon, K} = \cali{P} \left (\Phi^{\op{STR}_{2^{k - 1}, \varrho}}_{\frac{\varepsilon}{4}, 2 K}, \Phi^{\op{STR}_{2^{k - 1}, \varrho}}_{\frac{\varepsilon}{4}, 2 K}, \Phi^{\op{STR}_{2^{k - 1}, \varrho}}_{\frac{\varepsilon}{4}, 2 K},  \Phi^{\op{STR}_{2^{k - 1}, \varrho}}_{\frac{\varepsilon}{4}, 2 K}, \Phi^{\op{STR}_{2^{k - 1}, \varrho}}_{\frac{\varepsilon}{4}, 2 K}, \Phi^{\op{STR}_{2^{k - 1}, \varrho}}_{\frac{\varepsilon}{4}, 2 K}, \Phi^{\op{STR}_{2^{k - 1}, \varrho}}_{\frac{\varepsilon}{4}, 2 K} \right ).
\end{equation*}
Bearing this in mind, we define recursively that:
\[
\Phi^{\op{STR}_{2^k, \varrho}}_{\varepsilon, K} = \Phi^{\op{MIX}_k} \odot \Phi^{\op{PAR}_{k, \varrho}}_{\varepsilon, K} \odot \Phi^{\op{SPLIT}_k}.
\]
Its properties regarding accuracy and number of parameters are summarized in the following proposition.

\begin{proposition} \label{prop: MNN STR powers of 2}
    Let $k \in \N \cup \set{0}$, $A, B \in \R^{2^k \times 2^k}$, $\varepsilon > 0$, and $K > 0$ satisfying:
    \[
    \Abs{A}_\infty, \Abs{B}_\infty \leq K.
    \]
    Then,
    \begin{align}
        & \Abs{R \left ( \Phi^{\op{STR}_{2^k, \varrho}}_{\varepsilon, K} \right ) (A|B) - AB}_\infty \leq \varepsilon, \label{align: MNN STR product} \\
        & M\left ( \Phi^{\op{STR}_{2^k, \varrho}}_{\varepsilon, K} \right ) = 7^k \left ( M \left ( \Pi^{\varrho}_{\frac{\varepsilon}{4^k}, 2^k K} \right ) + 12 \right ) - 12 \cdot 4^k, \label{align: MNN STR number of weights} \\
        & L \left ( \Phi^{\op{STR}_{2^k, \varrho}}_{\varepsilon, K} \right ) = L \left ( \Pi_{\frac{\varepsilon}{4^k}, 2^k K}^\varrho \right ) + 2 k \label{align: MNN STR number of layers}.
    \end{align}
\end{proposition}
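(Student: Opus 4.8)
The plan is to argue by induction on $k$, establishing \eqref{align: MNN STR product}, \eqref{align: MNN STR number of weights} and \eqref{align: MNN STR number of layers} simultaneously and keeping the statement quantified over all $\varepsilon,K>0$, so that the inductive hypothesis can be reapplied to the sub-networks with the shifted parameters $\varepsilon/4$ and $2K$. The base case $k=0$ is immediate: by \eqref{eq:Phibase} the network $\Phi^{\op{STR}_{2^0,\varrho}}_{\varepsilon,K}$ is just $\Pi^\varrho_{\varepsilon,K}$ evaluated at the scalars $A_{1,1},B_{1,1}$, and since $\Abs{A}_\infty=\Abs{A_{1,1}}\le K$ and likewise for $B$, Hypothesis \ref{hyp: product varrho realization approximated} gives \eqref{align: MNN STR product}, while \eqref{align: MNN STR number of weights} and \eqref{align: MNN STR number of layers} reduce to the identities $7^0\big(M(\Pi^\varrho_{\varepsilon,K})+12\big)-12\cdot4^0=M(\Pi^\varrho_{\varepsilon,K})$ and $L(\Pi^\varrho_{\varepsilon,K})+2\cdot0=L(\Pi^\varrho_{\varepsilon,K})$.

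For the inductive step, assume the statement holds for $k-1$. Using $R(\Phi^1\odot\Phi^2)=R(\Phi^1)\circ R(\Phi^2)$ together with the parallelization identity, one writes $R(\Phi^{\op{STR}_{2^k,\varrho}}_{\varepsilon,K})(A|B)=R(\Phi^{\op{MIX}_k})(\tilde P_1,\dots,\tilde P_7)$, where $\tilde P_i=R(\Phi^{\op{STR}_{2^{k-1},\varrho}}_{\varepsilon/4,2K})(X_i|Y_i)$ and $X_i|Y_i$ is the $i$-th block of $R(\Phi^{\op{SPLIT}_k})(A|B)$, which is by construction precisely the pair of matrices whose product is the Strassen term $P_i$ of \eqref{align: Strassen products}. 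Each $X_i$ and each $Y_i$ is a signed sum of at most two $2^{k-1}\times 2^{k-1}$ submatrices of $A$ or $B$, hence has $\Abs{\,\cdot\,}_\infty\le 2K$; so the inductive hypothesis applies to every sub-network and gives $\Abs{\tilde P_i-P_i}_\infty\le\varepsilon/4$. Since $R(\Phi^{\op{MIX}_k})$ produces each $2^{k-1}\times 2^{k-1}$ block of its output as a signed sum of at most four of the $\tilde P_i$, and the corresponding block of $AB$ is the same signed sum of the $P_i$ by \eqref{align: product of matrices as seven products}, the error on each block is at most $4\cdot\varepsilon/4=\varepsilon$, and taking the maximum over the four blocks yields \eqref{align: MNN STR product}.

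For the parameter counts, the input is the behaviour of $M$ and $L$ under sparse concatenation and parallelization: under the conventions of the paper (allowing identity activation components) sparse concatenation is additive in the number of weights and layers, parallelization is additive in weights, and the number of layers of a parallelization of equally deep networks is their common depth. Together with \eqref{eq: number of weights mix 1}, \eqref{eq: number of weights SPLIT 1}, the equality $M(\Phi^{\op{PAR}_{k,\varrho}}_{\varepsilon,K})=7\,M(\Phi^{\op{STR}_{2^{k-1},\varrho}}_{\varepsilon/4,2K})$ and the one-layer structure of $\Phi^{\op{MIX}_k},\Phi^{\op{SPLIT}_k}$, this gives the recursions $M(\Phi^{\op{STR}_{2^k,\varrho}}_{\varepsilon,K})=9\cdot4^k+7\,M(\Phi^{\op{STR}_{2^{k-1},\varrho}}_{\varepsilon/4,2K})$ and $L(\Phi^{\op{STR}_{2^k,\varrho}}_{\varepsilon,K})=2+L(\Phi^{\op{STR}_{2^{k-1},\varrho}}_{\varepsilon/4,2K})$. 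Feeding in the inductive hypothesis, noting that the parameters of the base-case $\Pi^\varrho$ inside $\Phi^{\op{STR}_{2^{k-1},\varrho}}_{\varepsilon/4,2K}$ are exactly $(\varepsilon/4^k,2^kK)$, and using $7\cdot12\cdot4^{k-1}=21\cdot4^k$ with $9\cdot4^k-21\cdot4^k=-12\cdot4^k$, the weight recursion collapses to \eqref{align: MNN STR number of weights}; the layer recursion gives \eqref{align: MNN STR number of layers} since $2+2(k-1)=2k$.

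The main obstacle is bookkeeping rather than conceptual: one must be scrupulous that the scalings $\varepsilon\mapsto\varepsilon/4$ and $K\mapsto 2K$ chosen when instantiating the sub-networks are exactly what makes both the error telescope ($4\cdot\varepsilon/4=\varepsilon$, where the $4$ counts the Strassen terms appearing in a block) and the weight recursion close ($9\cdot4^k$ from $\Phi^{\op{MIX}_k}$ and $\Phi^{\op{SPLIT}_k}$ against the $21\cdot4^k$ coming out of $7^k$ times the inductive remainder), and one must check that for these particular networks the operations $\odot$ and $\mathcal{P}$ introduce no hidden weights or layers beyond the additive ones — which is exactly the point at which the freedom to use identity activation components is needed.
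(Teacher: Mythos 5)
Your proposal is correct and follows essentially the same route as the paper: the error bound comes from observing that each output block is a signed sum of at most four Strassen terms, each of which the sub-network approximates within $\varepsilon/4$, and the weight and layer formulas come from the same recursions $M_k = 7M_{k-1} + 9\cdot 4^k$ and $L_k = L_{k-1} + 2$. The only cosmetic difference is that the paper unrolls the weight recursion and sums the geometric series $9\sum_{i=1}^k 4^i 7^{k-i} = 12(7^k - 4^k)$, whereas you close it directly by substituting the inductive hypothesis; these are algebraically equivalent.
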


The result is proved in Appendix \ref{sec: proof of MNN STR powers of 2}.

Let us now extend this result to matrices of any shape. Consider $m, n, p \in \N$. In order to multiply a matrix $A\in\R^{m \times n}$ by another matrix $B\in\R^{n \times p}$ using the Strassen algorithm implemented with Neural Networks, we are going to prolong the matrices to size $2^k \times 2^k$, for:
\[
k = k_{m, p}^n \coloneq \Ceil{\log_2 (\max\set{m, n, p})}.
\]
In particular, we recall the estimate:
\begin{equation} \label{eq: upper bound ceiling}
    \log_2 (\max \set{m, n, p})) <  k_{m, p}^n = \Ceil{\log_2 (\max \set{m, n, p}))} \leq \log_2 (\max \set{m, n, p}) + 1.
\end{equation}

For that purpose, we define a Neural Network whose input is in  $\R^{n \times (m + p)}$ and whose output is in $\R^{2^k \times 2 \cdot 2^k}$ with $k = k_{m, p}^n$:
\begin{equation*}
    R \left ( \Phi^{\op{EXT}_n}_{m, p} \right ) \left ( A^T \middle | B \right ) = 
    \left .
    \begin{pmatrix}
        A & 0^{m, 2^k - n} \\ 0^{2^k - m, n} & 0^{2^k - m, 2^k - n}
    \end{pmatrix} 
    \middle |
    \begin{pmatrix}
        B & 0^{n, 2^k - p} \\ 0^{2^k - n, p} & 0^{2^k - n, 2^k - p}
    \end{pmatrix}
    \right .  \in \R^{2^k \times 2 \cdot 2^k},
\end{equation*}
where $A^T$ denotes the transpose of $A$ and $0^{m, n} = (0)_{\substack{i = 1, \dots, m \\ j = 1, \dots, n}} \in \R^{m \times n}$. Transposing the left-hand side matrix is necessary to obtain a well-defined matrix as input.
Similarly, we define a Neural Network whose input is in $\R^{2^{k_{m, p}^n} \times 2^{k_{m, p}^n}}$
and output is in $\R^{m\times n}$:
\[
R \left ( \Phi_{m, p}^{\op{SHR}_n} \right )(A) = A_{1:m, 1:p} \in \R^{m \times p}.
\]
It is easy to check that:
\begin{align} \label{eq: number weights EXT and SHR}
    M \left ( \Phi^{\op{EXT}_n}_{m, p} \right ) = n  (m + p) && \txt{ and } && M \left ( \Phi_{m, p}^{\op{SHR}_n} \right ) = mp.
\end{align}
Thus, we can define the Neural Network:
\[
\Phi_{\varepsilon, K}^{\op{STR}_{m, p, \varrho}^n} = \Phi^{\op{SHR}_n}_{m, p} \odot \Phi_{\varepsilon, K}^{\op{STR}_{2^{k_{m, p}^n}, \varrho}} \odot \Phi^{\op{EXT}_n}_{m, p} .
\]

\begin{corollary} \label{cor: MNN STR rect properties}
    Let $A \in \R^{m \times n}$, $B \in \R^{n \times p}$ and $K > 0$ satisfying:
    \[
    \Abs{A}_\infty, \Abs{B}_\infty \leq K.
    \]
    Then, if we call $\gamma = \max\set{m,n, p}$, we have for every $\varepsilon > 0$ the following:
    \begin{align}
        & \Abs{R \left ( \Phi_{\varepsilon, K}^{\op{STR}_{m, p, \varrho}^n} \right ) \left ( A^T \middle | B \right ) - AB}_\infty \leq \varepsilon, \label{eq: MNN STR rect product} \\
        & M \left ( \Phi_{\varepsilon, K}^{\op{STR}_{m, p, \varrho}^n} \right ) \leq 7 \gamma^{\log_2 7} \left ( M \left ( \Pi^{\varrho}_{\frac{\varepsilon}{4 \gamma^2}, 2 \gamma K} \right ) + 12 \right )  - 9 \gamma^2, \label{eq: MNN STR rect number of weights} \\
        & L \left ( \Phi_{\varepsilon, K}^{\op{STR}_{m, p, \varrho}^n} \right ) \leq L \left ( \Pi_{\frac{\varepsilon}{4 \gamma^2}, 2 \gamma K}^\varrho \right ) + 2(\log_2\gamma + 2) \label{eq: MNN STR rect number of layers}.
    \end{align}
\end{corollary}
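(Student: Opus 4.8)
The plan is to treat $\Phi_{\varepsilon, K}^{\op{STR}_{m, p, \varrho}^{n}}$ as a ``sandwich'' around Proposition~\ref{prop: MNN STR powers of 2}. The linear network $\Phi^{\op{EXT}_n}_{m,p}$ embeds $A$ and $B$ into square matrices $\widetilde A,\widetilde B\in\R^{2^{k}\times 2^{k}}$ of dyadic size, $k=k_{m,p}^{n}$, by appending zeros; the network $\Phi^{\op{STR}_{2^{k}, \varrho}}_{\varepsilon, K}$ carries out the approximate multiplication there; and the linear network $\Phi^{\op{SHR}_n}_{m,p}$ restricts the output to its top-left $m\times p$ block. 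Padding does not change any entry, so $\Abs{\widetilde A}_\infty=\Abs{A}_\infty\le K$ and $\Abs{\widetilde B}_\infty=\Abs{B}_\infty\le K$; and restriction to a submatrix is exact on the relevant block and non-expansive for $\Abs{\cdot}_\infty$. With these two observations, all three claims follow from Proposition~\ref{prop: MNN STR powers of 2}, the concatenation calculus of \cite{petersen2018optimal,kutyniok2022theoretical}, the weight counts~\eqref{eq: number weights EXT and SHR}, and the ceiling bound~\eqref{eq: upper bound ceiling}.

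For~\eqref{eq: MNN STR rect product}: since $R(\Phi^{\op{EXT}_n}_{m,p})(A^{T}\mid B)=\widetilde A\mid\widetilde B$, Proposition~\ref{prop: MNN STR powers of 2} gives $\Abs{R(\Phi^{\op{STR}_{2^{k}, \varrho}}_{\varepsilon, K})(\widetilde A\mid\widetilde B)-\widetilde A\widetilde B}_\infty\le\varepsilon$; the block structure of $\widetilde A$ and $\widetilde B$ forces $(\widetilde A\widetilde B)_{1:m, 1:p}=AB$, so applying $R(\Phi^{\op{SHR}_n}_{m,p})$ produces $AB$ exactly, while it can only decrease the $\Abs{\cdot}_\infty$ of the error. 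Composing the three realizations gives the estimate.

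For the size bounds I would use that $\Phi^{\op{EXT}_n}_{m,p}$ and $\Phi^{\op{SHR}_n}_{m,p}$ are single linear layers whose weight matrices have at most one nonzero per row, since they merely permute, select and zero entries. Fusing them into the adjacent linear layers $\Phi^{\op{SPLIT}_k}$ and $\Phi^{\op{MIX}_k}$ of $\Phi^{\op{STR}_{2^{k}, \varrho}}_{\varepsilon, K}$ raises neither the depth nor the number of weights of those layers, so $L(\Phi_{\varepsilon, K}^{\op{STR}_{m, p, \varrho}^{n}})= L(\Phi^{\op{STR}_{2^{k}, \varrho}}_{\varepsilon, K})$ and in any case
\[
M\left(\Phi_{\varepsilon, K}^{\op{STR}_{m, p, \varrho}^{n}}\right)\le M\left(\Phi^{\op{STR}_{2^{k}, \varrho}}_{\varepsilon, K}\right)+M\left(\Phi^{\op{EXT}_n}_{m,p}\right)+M\left(\Phi^{\op{SHR}_n}_{m,p}\right).
\]
Into these I plug~\eqref{align: MNN STR number of weights}--\eqref{align: MNN STR number of layers}, the identities $M(\Phi^{\op{EXT}_n}_{m,p})=n(m+p)$ and $M(\Phi^{\op{SHR}_n}_{m,p})=mp$ from~\eqref{eq: number weights EXT and SHR}, and the consequences of~\eqref{eq: upper bound ceiling}: $7^{k}\le 7\gamma^{\log_2 7}$, $\gamma^{2}\le 4^{k}\le 4\gamma^{2}$, $k\le\log_2\gamma+1$, and $n(m+p)+mp\le 3\gamma^{2}$. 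Since a $\Pi^{\varrho}$ network accurate to $\varepsilon'$ on $[-K',K']^{2}$ also works for any larger error on any smaller box, I may take $M(\Pi^{\varrho}_{\varepsilon',K'})$ and $L(\Pi^{\varrho}_{\varepsilon',K'})$ monotone in $(\varepsilon',K')$, whence $M(\Pi^{\varrho}_{\varepsilon/4^{k}, 2^{k}K})\le M(\Pi^{\varrho}_{\varepsilon/(4\gamma^{2}), 2\gamma K})$ and likewise for $L$. Assembling, $7^{k}(M(\Pi^{\varrho}_{\varepsilon/4^{k}, 2^{k}K})+12)-12\cdot 4^{k}+n(m+p)+mp$ is at most $7\gamma^{\log_2 7}(M(\Pi^{\varrho}_{\varepsilon/(4\gamma^{2}), 2\gamma K})+12)-12\gamma^{2}+3\gamma^{2}$, which is~\eqref{eq: MNN STR rect number of weights}; and $L(\Pi^{\varrho}_{\varepsilon/4^{k}, 2^{k}K})+2k\le L(\Pi^{\varrho}_{\varepsilon/(4\gamma^{2}), 2\gamma K})+2(\log_2\gamma+2)$, which is~\eqref{eq: MNN STR rect number of layers}.

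I expect the work to be essentially bookkeeping, with only two mildly delicate points. First, the use of $4^{k}$ is sign-sensitive: one needs the lower bound $4^{k}\ge\gamma^{2}$ so that $-12\cdot 4^{k}$ becomes $-12\gamma^{2}$ and absorbs the $+3\gamma^{2}$ of glue weights down to the stated $-9\gamma^{2}$, while one simultaneously needs the upper bound $4^{k}\le 4\gamma^{2}$ to guarantee $\varepsilon/4^{k}\ge\varepsilon/(4\gamma^{2})$ and thus to compare the parameters of the scalar-product network. Second, although trivial, the facts that zero-padding leaves $\Abs{\cdot}_\infty$ unchanged and that passing to a submatrix cannot increase it are exactly what make the reduction to dyadic sizes lossless, and should be stated explicitly. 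No ingredient beyond Proposition~\ref{prop: MNN STR powers of 2}, the concatenation rules, and~\eqref{eq: upper bound ceiling} is required.
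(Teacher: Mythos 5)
Your proposal is correct and takes essentially the same route the paper intends: the paper dismisses this corollary as a "direct consequence of Proposition~\ref{prop: MNN STR powers of 2} and the definition of $\Phi^{\op{STR}}$ and $\Phi^{\op{EXT}}$", and your sandwich argument (zero-pad, Strassen multiply, restrict), together with the bookkeeping via \eqref{align: MNN STR number of weights}--\eqref{align: MNN STR number of layers}, \eqref{eq: number weights EXT and SHR}, and \eqref{eq: upper bound ceiling}, is precisely that omitted argument spelled out. You correctly flag the two points the one-line remark glosses over, namely the opposite-direction uses of $\gamma^2 \leq 4^k \leq 4\gamma^2$ and the (harmless) implicit monotonicity of $M(\Pi^\varrho_{\varepsilon',K'})$, $L(\Pi^\varrho_{\varepsilon',K'})$ in $(\varepsilon',K')$, which the paper also relies on without comment.
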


This result is a direct consequence of Proposition \ref{prop: MNN STR powers of 2} and the definition of $\Phi^{\op{STR}}$ and $\Phi^{\op{EXT}}$.

\begin{remark}
    When considering $\varrho = \ReLU$, we get, using \eqref{item: product weights*} and \eqref{item: product layers*}, the following asymptotic bounds:
    \begin{align}
        M \left ( \Phi^{\op{STR}_{m, p, \varrho}^n}_{\varepsilon, K} \right ) & \leq C \gamma^{\log_2 7} \left [ \log_2 \frac{1}{\varepsilon} + \log_2 \gamma + \log_2 \max \set{1, K} +1 \right ], \label{eq: MNN MULT weights*} \\
        L \left ( \Phi^{\op{STR}_{m, p, \varrho}^n}_{\varepsilon, K} \right ) & \leq C \left [\log_2 \frac{1}{\varepsilon} + \log_2 \gamma + \log_2 \max \set{1, K} +1\right ]. \label{eq: MNN MULT layers*}
    \end{align}
    Compared to the result presented in \cite[Proposition 3.7]{kutyniok2022theoretical}, we reduce the dimension dependency of order 3 to $\log_2 7$, as expected by applying Strassen's algorithm.
\end{remark}

For square matrices, to avoid using the transpose in the input, we only require to do a minor modification on the definition of the previous $\varrho$-MNN:
\begin{equation}\label{eq:defSTRsquare}
\Phi^{\op{STR}_{n, \varrho}}_{\varepsilon, K} = \Phi_{n, n}^{\op{SHR}_n} \odot \Phi_{\varepsilon, K}^{\op{STR}_{2^{k}, \varrho}} \odot \Phi_n^{\op{EXT}^*},
\end{equation}
where $k = \Ceil{\log_2 n}$ and $\Phi_n^{\op{EXT}^*}$ is a  MNN whose input is in $\R^{n\times 2n}$ and output in $\R^{2^k\times 2\cdot2^k}$ satisfying:
\[
R \left ( \Phi_n^{\op{EXT}^*} \right ) (A | B) = \left . \begin{pmatrix} A & 0^{n, n - 2^k} \\ 0^{2^k - n, n} & 0^{2^k - n, 2^k - n} \end{pmatrix} \middle | \begin{pmatrix} B & 0^{n, n - 2^k} \\ 0^{2^k - n, n} & 0^{2^k - n, 2^k - n} \end{pmatrix} \right .
\]
for all $A, B \in \R^{n \times n}$. 
\begin{remark} 
    When $n=2^k$, $R\left(\Phi^{\op{SHR}_n}_{n,n}\right)$ and $R\left(\Phi_n^{\op{EXT}^*}\right)$ are identity tensors, which is why we consider \eqref{eq:defSTRsquare} an extension. 
    In that case, there is only a small difference in its length and number of weights, which is irrelevant for getting the asymptotic values on the number of weights and layers. 
\end{remark}

Considering Corollary \ref{cor: MNN STR rect properties}, we have that:
\begin{corollary} \label{cl: MNN Strassen for square matrices}
    Let $A, B \in \R^{n \times n}$, and $K > 0$ satisfying:
    \[
    \Abs{A}_\infty, \Abs{B}_\infty \leq K.
    \]
    Then, for every $\varepsilon > 0$,
    \begin{align}
        & \Abs{R\left(\Phi_{\varepsilon, K}^{\op{STR}_{n, \varrho}}\right)\left ( A \middle | B \right ) - AB}_\infty \leq \varepsilon, \label{eq: MNN STR arbitrary square product} \\
        & M \left ( \Phi_{\varepsilon, K}^{\op{STR}_{n, \varrho}} \right ) \leq 7 n^{\log_2 7} \left ( M \left ( \Pi^{\varrho}_{\frac{\varepsilon}{4 n^2}, 2 n K} \right ) + 12 \right ) - 9 n^2, \label{eq: MNN STR arbitrary square weights} \\
        & L \left ( \Phi_{\varepsilon, K}^{\op{STR}_{n, \varrho}} \right ) \leq L \left ( \Pi_{\frac{\varepsilon}{4 n^2}, 2 n K}^\varrho \right ) + 2(\log_2 n + 2). \label{eq: MNN STR arbitrary square layers}
    \end{align}
\end{corollary}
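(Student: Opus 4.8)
The plan is to read off all three assertions from Corollary~\ref{cor: MNN STR rect properties} applied with $m=p=n$, after checking that the only modification in the construction of $\Phi^{\op{STR}_{n, \varrho}}_{\varepsilon, K}$ relative to $\Phi^{\op{STR}_{n,n,\varrho}^n}_{\varepsilon,K}$---namely replacing $\Phi^{\op{EXT}_n}_{n,n}$, which transposes its first input block, by $\Phi^{\op{EXT}^*}_n$, which does not---changes neither the number of weights nor the number of layers, and alters the realization only through that transposition of the first input block.

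First I would record that $k^n_{n,n}=\Ceil{\log_2 n}=k$, so that the middle and final blocks of $\Phi^{\op{STR}_{n,\varrho}}_{\varepsilon,K}$ and of $\Phi^{\op{STR}_{n,n,\varrho}^n}_{\varepsilon,K}$ are literally the same networks, $\Phi_{\varepsilon,K}^{\op{STR}_{2^k,\varrho}}$ and $\Phi^{\op{SHR}_n}_{n,n}$. I would then compare the two extension layers: $\Phi^{\op{EXT}^*}_n$ and $\Phi^{\op{EXT}_n}_{n,n}$ are one-layer MNNs with the same input shape $\R^{n\times 2n}$ and output shape $\R^{2^k\times 2\cdot 2^k}$, both simply zero-padding their two input blocks into the top-left corner of a $2^k\times 2^k$ block, the sole difference being that $\Phi^{\op{EXT}_n}_{n,n}$ interprets its first block as $A^T$. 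Hence, for every $A,B\in\R^{n\times n}$,
\[
R\left(\Phi^{\op{EXT}^*}_n\right)(A|B)=R\left(\Phi^{\op{EXT}_n}_{n,n}\right)\left(A^T\middle|B\right),
\]
and, using the composition rule for the sparse concatenation,
\[
R\left(\Phi^{\op{STR}_{n,\varrho}}_{\varepsilon,K}\right)(A|B)=R\left(\Phi^{\op{STR}_{n,n,\varrho}^n}_{\varepsilon,K}\right)\left(A^T\middle|B\right).
\]
Counting nonzero entries of the associated tensors gives $M(\Phi^{\op{EXT}^*}_n)=2n^2=M(\Phi^{\op{EXT}_n}_{n,n})$ (this is \eqref{eq: number weights EXT and SHR} with $m=p=n$) and $L(\Phi^{\op{EXT}^*}_n)=1=L(\Phi^{\op{EXT}_n}_{n,n})$; since the two full networks differ only in this one-layer block of equal size and depth, they have the same number of weights and the same number of layers.

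With these identifications in place I would apply Corollary~\ref{cor: MNN STR rect properties} to the square data $A\in\R^{n\times n}$, $B\in\R^{n\times n}$, so that $\gamma=\max\set{n,n,n}=n$ and the hypotheses $\Abs{A}_\infty,\Abs{B}_\infty\le K$ are exactly the ones needed there. Estimate~\eqref{eq: MNN STR rect product} then gives $\Abs{R(\Phi^{\op{STR}_{n,n,\varrho}^n}_{\varepsilon,K})(A^T|B)-AB}_\infty\le\varepsilon$, which through the realization identity above is precisely~\eqref{eq: MNN STR arbitrary square product}; and substituting $\gamma=n$ into~\eqref{eq: MNN STR rect number of weights} and~\eqref{eq: MNN STR rect number of layers} yields~\eqref{eq: MNN STR arbitrary square weights} and~\eqref{eq: MNN STR arbitrary square layers}, respectively.

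Since the whole argument reduces to bookkeeping already carried out in Proposition~\ref{prop: MNN STR powers of 2} and Corollary~\ref{cor: MNN STR rect properties}, I do not anticipate a genuine obstacle. The one point that deserves a line of care is confirming that exchanging $A^T$ for $A$ in the first layer is invisible both to $M$ and to $L$ and does not leak into the recursion that defines $\Phi_{\varepsilon,K}^{\op{STR}_{2^k,\varrho}}$, which is immediate from the explicit one-layer description of $\Phi^{\op{EXT}^*}_n$.
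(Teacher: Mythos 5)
Your proof is correct and follows exactly the paper's approach: the paper simply remarks that the resulting network is as in Corollary~\ref{cor: MNN STR rect properties} but without the transposition, and your argument spells out the bookkeeping (equal weights and layers for $\Phi^{\op{EXT}^*}_n$ and $\Phi^{\op{EXT}_n}_{n,n}$, $\gamma=n$) that the paper leaves implicit.
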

Indeed, the resulting Matrix Neural Network is as in Corollary \ref{cor: MNN STR rect properties} but without transposing.

\section{Matrix Inversion Approximation} \label{sec: Strassen Matrix Multiplication for Inverting Matrices}

In this section, we aim to construct a Matrix Neural Network which, given a matrix input, returns an approximation to its inverse. This can be achieved using our newly defined Strassen Neural Networks to approximate the matrix product and the Neumann series for matrices.

We know that for all $A \in \R^{n \times n}$ such that $\Norm{A}_2 < 1$, we have the identity:
\[
(\op{I}_n - A)^{-1} = \sum_{k = 0}^\infty A^k,
\]
where the right-hand side is called the Neumann series of $A$.
We also have the estimate:
\begin{equation} \label{eq: Neumann partial sum error}
\Norm{(\op{I}_n - A)^{-1} - \sum_{k = 0}^N A^k}_2 = \Norm{\sum_{k = N + 1}^\infty A^k}_2 \leq \sum_{k = N + 1}^{\infty} \Norm{A}^k_2 = \frac{\Norm{A}_2^{N + 1}}{1- \Norm{A}_2}.
\end{equation}
Consequently, we can approximate the Neumann series of a matrix computing the partial sums while minimizing the number of matrix products using the following identity:
\begin{equation}
    \sum_{k = 0}^{2^{N + 1} - 1} A^k  = \left ( A^{2^N}+\op{I}_n  \right ) \sum_{k = 0}^{2^N - 1} A^k  = \prod_{k = 0}^{N} \left ( A^{2^k} + \op{I}_n \right ). \label{eq: Neumann sum as product}
\end{equation}
Indeed, the left hand side of \eqref{eq: Neumann sum as product} has a considerably larger number of multiplications than the logarithmic order on the right-hand side.

\begin{remark} \label{rm: inverse as Neumann sum}
    The Neumann series can be used to approximate the inverse of any invertible matrix $A\in\mathbb{R}^{n\times n}$. If $P\in\mathbb{R}^{n\times n}$ satisfies $\Norm{\op{I}_n - A P}_2 < 1$ then
    \[
    A^{-1} = P(\op{I}_n - (\op{I}_n - AP))^{-1} = P \sum_{k = 0}^\infty (\op{I}_n - AP)^k.
    \]
    Conversely, if such $P$ does not exist, then $A$ cannot be invertible.
    The particular case where $P = \alpha \op{I}$ is of high interest for real symmetric positive definite matrices, as $\alpha$ can be estimated fairly easily. Indeed, if $A$ is real symmetric, it can be diagonalized and 
    $$\Norm{\op{I}_n - \alpha A}_2=\max_{\lambda\in\Lambda(A)}|1-\alpha\lambda|,$$
    for $\Lambda(A)$ the set of eigenvalues of $A$. Real symmetric positive definite matrices appear naturally in the resolution of PDEs with the Galerkin method or finite differences. 
\end{remark}

We can finally get a MNN approximating the inverse of some matrices.

\begin{theorem} \label{th: INV MNN}
    Suppose that Hypothesis \ref{hyp: product varrho realization approximated} is satisfied. Let $\varepsilon > 0$, $\alpha \in \R$, $\delta \in (0, 1)$ and $n \in \N$. We call
    \begin{equation}\label{def:nepsdel}
    \begin{aligned}
        N(\varepsilon, \delta) &= \max \set{\Ceil{\log_2 \left ( \frac{\log_2(\varepsilon(1-\delta))}{\log_2(\delta)} \right )}, 1}, \\
        \Sigma(\varepsilon, \delta, n) &= 2^{-2^{N(\varepsilon, \delta)}} \frac{\min \set{\varepsilon, 1/4}}{16n^3}.
    \end{aligned}
    \end{equation}
    Then, there exists a $\varrho$-MNN $\Phi^{\op{INV}_{n, \varrho}^\alpha}_{\varepsilon, \delta}$ satisfying:
    \begin{equation} \label{eq: INV MNN inverse}
        \Norm{A^{-1} - R \left ( \Phi^{\op{INV}_{n, \varrho}^\alpha}_{\varepsilon, \delta} \right ) (A)}_2 \leq \varepsilon
    \end{equation}
    for all $A \in \R^{n \times n}$ such that 
    \[
    \Norm{\Eye_n - \alpha A}_2 \leq \delta,
    \]
    and satisfying:
    \begin{align}
        M \left ( \Phi^{\op{INV}_{n, \varrho}^\alpha}_{\varepsilon, \delta} \right ) & \leq 14 n^{\log_2 7} \left (N\left ( \frac{\varepsilon}{2 \alpha}, \delta \right ) - 1 \right ) \left ( M \left ( \Pi^\varrho_{\Sigma(\varepsilon/\alpha, \delta, n), 2n} \right ) + 12 \right ) \nonumber \\ 
        & + n^2 \left ( L \left ( \Pi_{\Sigma(\varepsilon/\alpha, \delta, n), 2 n}^\varrho \right ) - 14 N\left ( \frac{\varepsilon}{2 \alpha}, \delta \right ) + 20 + 2 \log_2 n \right ) \label{eq: INV MNN weights} \\
        & + n \left ( N \left ( \frac{\varepsilon}{2 \alpha}, \delta \right ) + 1 \right ), \nonumber \\
        L \left ( \Phi_{\varepsilon, \delta}^{\op{INV}^\alpha_{n, \varrho}} \right ) & \leq N\left ( \frac{\varepsilon}{2 \alpha}, \delta \right ) \left [ 2 \log_2 n + 5 + L \left ( \Pi_{\Sigma(\varepsilon/\alpha, \delta, n), 2 n}^\varrho \right ) \right ], \label{eq: INV MNN layers}
    \end{align}
    if $N \left ( \frac{\varepsilon}{2 \alpha}, \delta \right ) \geq 2$, and satisfying:
    \begin{align}
        M \left ( \Phi^{\op{INV}_{n, \varrho}^\alpha}_{\varepsilon, \delta} \right ) & = 2(n^2 + n), \label{eq: INV MNN weights N equals 1} \\
        L \left ( \Phi^{\op{INV}_{n, \varrho}^\alpha}_{\varepsilon, \delta} \right ) & = 2, \label{eq: INV MNN layers N equals 1}
    \end{align}
    if $N \left ( \frac{\varepsilon}{2 \alpha}, \delta \right ) = 1$.
\end{theorem}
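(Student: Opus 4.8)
The plan is to approximate $A^{-1}$ by the truncated Neumann series $\alpha\sum_{k=0}^{2^N-1}(\Eye_n-\alpha A)^k$, to rewrite it through the telescoping identity~\eqref{eq: Neumann sum as product}, and to realise each matrix multiplication appearing there with the square Strassen network of Corollary~\ref{cl: MNN Strassen for square matrices}. Concretely, I would set $B=\Eye_n-\alpha A$, so $\Norm{B}_2\leq\delta<1$ and $A^{-1}=\alpha\sum_{k\geq0}B^k$ by Remark~\ref{rm: inverse as Neumann sum}. With $N=N(\varepsilon/(2\alpha),\delta)$, combining~\eqref{eq: Neumann partial sum error} (truncation at $2^N-1$) with~\eqref{eq: Neumann sum as product} (applied with $N-1$ in place of $N$) gives
\[
\Norm{A^{-1}-\alpha\prod_{k=0}^{N-1}(B^{2^k}+\Eye_n)}_2\ \leq\ \Abs{\alpha}\,\frac{\delta^{2^N}}{1-\delta},
\]
and the definition~\eqref{def:nepsdel} of $N(\cdot,\cdot)$ is calibrated exactly so that the right-hand side is at most $\varepsilon/2$ (this is the role of the factor $2\alpha$ in its first argument). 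It therefore remains to realise $A\mapsto\alpha\prod_{k=0}^{N-1}(B^{2^k}+\Eye_n)$ within $\varepsilon/2$ in $\Normm_2$, switching between $\Abs{\cdot}_\infty$ (the norm controlled by Corollary~\ref{cl: MNN Strassen for square matrices}) and $\Norm{\cdot}_2$ through~\eqref{eq: infinity and 2 norm equivalence constants} whenever needed.

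\textbf{The construction.} If $N=1$ the target is the affine map $A\mapsto\alpha(2\Eye_n-\alpha A)$, realised with two affine layers, which yields~\eqref{eq: INV MNN weights N equals 1}--\eqref{eq: INV MNN layers N equals 1}. For $N\geq2$ I would build the network recursively. An initial affine layer produces $B$; then, for $k=1,\dots,N-1$, a step receives the pair $\bigl(B^{2^{k-1}},P_{k-1}\bigr)$ with $P_{k-1}=\prod_{j=0}^{k-2}\tfrac12(B^{2^j}+\Eye_n)$ and outputs $\bigl(B^{2^k},P_k\bigr)$, where $B^{2^k}=B^{2^{k-1}}\cdot B^{2^{k-1}}$ and $P_k=P_{k-1}\cdot\tfrac12(B^{2^{k-1}}+\Eye_n)$ are computed by a parallelisation of two copies of the square Strassen network of Corollary~\ref{cl: MNN Strassen for square matrices}; a closing layer then forms the output $\alpha\,2^{N}\left(P_{N-1}\cdot\tfrac12(B^{2^{N-1}}+\Eye_n)\right)=\alpha\sum_{i=0}^{2^N-1}B^i$. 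The crucial point keeping the depth linear rather than quadratic in $N$ is that both products at step $k$ read only the \emph{input} power $B^{2^{k-1}}$, so they genuinely run in parallel and step $k$ costs a single matrix-multiplication depth. I normalise each factor by $\tfrac12$ so that every matrix fed to a Strassen network has operator norm $\leq1$, hence $\Abs{\cdot}_\infty\leq1$ (the factor $2^N$ lost to the normalisations is restored, together with the multiplication by $\alpha$, in the closing layer); identity channels carry the auxiliary matrix through the internal layers of the Strassen sub-networks, and the additions of $\Eye_n$ and the scalings are absorbed into affine layers. In total this uses $2(N-1)$ square Strassen networks, each called with input bound $1$ and accuracy $4n^2\Sigma$, so that its scalar-product blocks are $\Pi^\varrho_{\Sigma,2n}$ with $\Sigma=\Sigma(\varepsilon/\alpha,\delta,n)$ as in~\eqref{def:nepsdel}.

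\textbf{Accuracy.} This is the main obstacle. Writing $\widetilde{B}^{2^k}$ and $\widetilde{P}_k$ for the matrices the network actually produces, I would first verify inductively that they stay within the $\Abs{\cdot}_\infty\leq1$ range required by Corollary~\ref{cl: MNN Strassen for square matrices}, using $\Norm{B^{2^k}}_2\leq\delta^{2^k}<1$, submultiplicativity of $\Norm{\cdot}_2$ on the normalised factors, and the smallness of the committed errors. Then I would propagate the error through the recursion: each squaring amplifies the current error by a bounded factor and adds a term of order $n^3\Sigma$ (one $n$ from $\Abs{CD}_\infty\leq n\Abs{C}_\infty\Abs{D}_\infty$, one from~\eqref{eq: infinity and 2 norm equivalence constants} applied to the Strassen output, and the Strassen $\infty$-error $4n^2\Sigma$), and the product update does the same; accounting for this over the $\approx N$ steps and then for the final rescaling by $\alpha 2^N$, one checks that the choice of $\Sigma(\varepsilon/\alpha,\delta,n)$ in~\eqref{def:nepsdel} forces the remaining error below $\varepsilon/2$. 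Together with the truncation estimate above this gives~\eqref{eq: INV MNN inverse}.

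\textbf{Size.} The bounds~\eqref{eq: INV MNN weights}--\eqref{eq: INV MNN layers} then follow by bookkeeping with the standard sparse-concatenation and parallelisation estimates and with Corollary~\ref{cl: MNN Strassen for square matrices}: the $2(N-1)$ Strassen calls contribute the $14n^{\log_2 7}(N(\varepsilon/(2\alpha),\delta)-1)(M(\Pi^\varrho_{\Sigma,2n})+12)$ term; carrying the auxiliary matrix through the $\approx N(2\log_2 n+5+L(\Pi^\varrho_{\Sigma,2n}))$ layers via identity channels contributes the $n^2(\cdots)$ term; the biases $\Eye_n$ together with the initial and closing affine layers contribute the $n(\cdots)$ term; and since each step adds the depth $2\log_2 n+5+L(\Pi^\varrho_{\Sigma,2n})$ of one square Strassen network plus the affine glue, iterating over the $N(\varepsilon/(2\alpha),\delta)$ steps gives the layer count.
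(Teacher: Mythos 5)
Your proposal follows essentially the same route as the paper: truncate the Neumann series at $2^N-1$ terms with $N=N(\varepsilon/(2\alpha),\delta)$, rewrite the partial sum via the telescoping product~\eqref{eq: Neumann sum as product}, build the network with two parallel branches (one carrying the repeated squares, one accumulating the partial products) glued by affine ``mix'' layers, realise every matrix multiplication with the square Strassen network of Corollary~\ref{cl: MNN Strassen for square matrices}, control things by bouncing between $\Abs{\cdot}_\infty$ and $\Normm_2$ via~\eqref{eq: infinity and 2 norm equivalence constants}, and count $2(N-1)$ Strassen calls for the size bookkeeping. This is exactly what the paper does through Lemmas~\ref{lm: SQR N times MNN} and~\ref{th: NEU MNN}.

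The one genuine (though minor) divergence is the normalisation scheme. You normalise each factor uniformly by $\tfrac12$, carry the \emph{unnormalised} powers $B^{2^k}$ in the squaring branch, and restore a factor of $2^N$ at the end. The paper instead normalises the $k$-th factor by $2^{-2^k}$ (see~\eqref{eq: Neumann series as product with factor}), carries $(B/2)^{2^k}$ in the squaring branch, and restores $2^{2^N-1}$ at the end. The paper's choice buys a very clean error analysis: with $\Norm{B/2}_2\leq\tfrac12$ the inductive error in Lemma~\ref{lm: SQR N times MNN} does not grow from step to step, and every quantity fed to a Strassen call sits comfortably at norm at most $\tfrac12$ rather than close to $1$. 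Your version must instead argue that, for $\delta$ near $1$, the possible doubling of the squaring-branch error over the first $\approx\log_2\log\frac{1}{1-\delta}$ steps is absorbed by the extremely small $\Sigma$ in~\eqref{def:nepsdel} — plausible, but a more delicate propagation than the paper's. In exchange, your restoration factor $2^N$ is exponentially smaller than the paper's $2^{2^N-1}$, so your construction actually needs far less accuracy than the $2^{-2^N}$-scaled $\Sigma$ in the statement; if you kept your normalisation and retuned $\Sigma$ accordingly, you could claim a strictly better weight bound. As written, your sketch of the error-propagation step is the only place that is not fully justified; everything else maps one-to-one onto the paper's argument.
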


Note that $\varepsilon \mapsto N(\varepsilon,\delta)$ is a decreasing function when $\delta\in(0,1)$.

The proof can be found in Appendix \ref{sec: proof of teo INV MNN}. The idea of the proof is first construct a MNN that approximates the powers $A^{2^k}$, then another to compute the product \eqref{eq: Neumann sum as product}, and finally, using the Neumann series for an appropriate number of terms ($2^{N(\frac{\varepsilon}{2\alpha}, \delta)+1}$ concretely), we get the desired MNN.

\begin{remark}
    As stated in Remark \ref{rm: inverse as Neumann sum}, we could construct a $\varrho$-MNN that approximates any matrix $A \in \R^{n\times n}$ such that
    \[
    \Norm{\op{I}_n - AP} \leq \delta < 1,
    \]
    where is $P \in \R^{n\times n}$ is a fixed invertible matrix. But for simplicity of the statement and proof, we stick to the case $P = \alpha \op{I}_n$.
\end{remark}

\begin{remark} \label{remark: INV MNN comparation}
    For the particular case $\varrho = \ReLU$ and $\alpha = 1$, estimates \eqref{eq: INV MNN weights} and \eqref{eq: INV MNN layers} can be written as:
    \begin{align}
        M \left ( \Phi_{\varepsilon, \delta}^{\op{INV}_n^*} \right ) & \leq C n^{\log_2 7} \log_2 m(\varepsilon, \delta) \left ( \log_2 \frac{1}{\varepsilon} + m(\varepsilon, \delta) + \log_2 n  \right ), \label{eq: MNN INV ReLU weights*} \\
        L \left ( \Phi_{\varepsilon, \delta}^{\op{INV}_n^*} \right ) & \leq C \log_2 m(\varepsilon, \delta) \left ( \log_2 \frac{1}{\varepsilon} + m(\varepsilon, \delta) + \log_2 n \right ), \label{eq: MNN INV ReLU layers*}
    \end{align}
    where $C > 0$ is a constant and
    \[
    m(\varepsilon, \delta) = \frac{\log_2(\varepsilon(1-\delta)/2)}{\log_2 \delta} \approx 2^{N(\varepsilon/2, \delta)}.
    \]
    This result presents various improvements with respect to prior work, concretely \cite[Theorem 3.8]{kutyniok2022theoretical}. Although the number of layers is asymptotically worse, the upper bound on the number of weights on \eqref{eq: MNN INV ReLU weights*} is an improvement over theirs. The exponent on the dimension of the input matrices is reduced from 3 to $\log_2 7$ and the factor is $\log_2 m(\varepsilon, \delta)$ instead of $m(\varepsilon, \delta) \log_2^2(m(\varepsilon, \delta))$. In parentheses, even if we have $m(\varepsilon, \delta)$ instead of $\log_2 m(\varepsilon, \delta)$, it is still an improvement if we compare the expanded expression 
    $m(\varepsilon, \delta)\log_2(m(\varepsilon,\delta))$ to $m(\varepsilon, \delta)\log_2(m(\varepsilon,\delta))\log_2^2(m(\varepsilon,\delta))$. The first change comes from the use of the Strassen algorithm for matrix multiplication, while the second is due to the use of the identity \eqref{eq: Neumann sum as product}.
\end{remark}

\section{\texorpdfstring{Applications for resolution of elliptic \\ parametric PDEs}{Applications for resolution of elliptic parametric PDEs}} \label{sec: Applications}


As explained in the paper \cite{kutyniok2022theoretical}, this way of inverting matrices allows to numerically solve some parametric elliptic PDEs.

Consider the weak form of an elliptic PDE:
\begin{equation} \label{eq: elliptic PDE weak}
b_y(u_y, v) = f_y(v) \quad \txt{ for all } v \in H, y \in \cali{Y},
\end{equation}
where 
\begin{enumerate}
    \item $H$ is a infinite dimensional separable Hilbert space with $\Normm_H$ its associated norm,
    \item $\cali{Y}$ is the parameter set,
    \item $b_y$ is a parameter dependent bilinear form,
    \item $f_y$ is a parameter dependent linear map in the dual of $H$,
    \item $u_y \in H$ is the solution of \eqref{eq: elliptic PDE weak}.
\end{enumerate}
A way to garantee the existence and uniqueness of $u_y$ is assuming some properties on the bilinear forms $b_y$:
\begin{enumerate}
    \item $b_y$ are symmetric, that is, for all $y \in \cali{Y}$ and $u, v \in H$, $b_y(u, v) = b_y(v, u)$.

    \item $b_y$ are uniformly continuous, that is, there exists a constant $C_{\op{cont}} > 0$ such that
    \[
    \Abs{b_y(u, v)} \leq C_{\op{cont}} \Norm{u}_H \Norm{v}_H, \quad \forall y \in \cali{Y},\ \  \forall u, v \in H.
    \]

    \item $b_y$ are uniformly coercive, that is, there exists a constant $C_{\op{coer}} > 0$ such that
    \[
    b_y(u, u) \geq C_{\op{coer}} \Norm{u}_H^2, \quad  \forall y \in \cali{Y},\ \  \forall u \in H.
    \]
\end{enumerate}
In fact, the Lax-Milgram Theorem ensures it, as proven in Theorem 1 of Section 6.2 of \cite{evans2022partial}.

In this situation, we can use the Galerkin method to discretize the Hilbert space truncating a Hilbert base to transform the PDE into a linear system of equations. The associated matrix of said system is defined as
\[
B_y = \left ( b_y(\varphi_i, \varphi_j) \right )_{\substack{i = 1, \dots, n \\ j = 1, \dots, n}} \in \R^{n \times n},
\]
where $\set{\varphi_i}_{i \in \N}$ is an orthonormal basis of $H$ and $n \in \N$ is the number of elements of said basis after the truncation. We are going to see, with the assumption on $b_y$, that the matrices $B_y$ satisfy the requirement that there exist $\alpha \in \R$ and $\delta\in(0,1)$ independent of $y \in \cali{Y}$ such that $\Norm{\Eye_n - \alpha B_y}_2 < \delta$. This property makes the MNN of Theorem \ref{th: INV MNN} particularly useful: one MNN  gives the Galerkin solution for all $y \in \cali{Y}$ as long as we are able to obtain the matrices $B_y$.

\begin{proposition}
    Let $H$ be a separable Hilbert space with $\set{\varphi_i}_{i \in \N}$ an orthonormal basis, $\cali{Y}$ an arbitrary set, and $\set{b_y}_{y \in \cali{Y}}$ be a collection of bilinear forms in $H$. Suppose that $\set{b_y}_{y \in \cali{Y}}$ are symmetric, uniformly continuous with constant $C_{\op{cont}}$ and uniformly coercive with constant $C_{\op{coer}}$. Then, for every $\alpha \in \left (0, \frac{2}{C_{\op{cont}}} \right )$ and $n \in \N$ all matrices
    \[
    B_y = \left ( b_y(\varphi_i, \varphi_j) \right )_{\substack{i = 1, \dots, n \\ j = 1, \dots, n}} \in \R^{n \times n}
    \]
    satisfy: 
    \[
    \Norm{\Eye_n - B_y}_2 \leq \max \set{\Abs{1 - \alpha C_{\op{cont}}}, \Abs{1 - \alpha C_{\op{coer}}}} < 1.
    \]
\end{proposition}

The proof is straight forward, so we chose to omit it.

\begin{remark}
    This proposition is a generalization of Proposition B.1 of \cite{kutyniok2022theoretical}, where they consider $\alpha\in\left(0,\frac{1}{C_{\op{cont}}}\right)$ instead of $\alpha\in\left(0,\frac{2}{C_{\op{cont}}}\right)$.
\end{remark}

We can approximate the inverse of any $B_y$ using the MNN of Theorem \ref{th: INV MNN} for $\alpha \in \left ( 0, \frac{2}{C_{\op{cont}}} \right )$ and $\delta = \max \set{\Abs{1 - \alpha C_{\op{cont}}}, \Abs{1 - \alpha C_{\op{coer}}}}$. Therefore, as explained in \cite{kutyniok2022theoretical}, we can numerically solve \eqref{eq: elliptic PDE weak} for every parameter $y \in \cali{Y}$ with a single MNN. Indeed, they prove that the inverse of all of those matrices can be approximated by a single Neural Network with worse upper bounds on the number of parameters as explained in Remark \ref{remark: INV MNN comparation}. Therefore, we reach the same conclusion with a better dependency on the Galerkin dimension for approximating the parameter-to-solution map. However, other methods, like those of multilevel ideas in \cite{JMLR:v24:23-0421}, provides even better bounds for the particular equation considered where they rely on a more efficient way of solving the resulting linear system.

\section{Conclusions}\label{sec:conclusions}
We have constructed two  Neural Networks approximating matrix multiplication and inversion operators. We have been able to do so without restricting ourselves to a fixed activation function. Also, we bound their number of weights and layers, and by using the Strassen algorithm we have reduced a potential cubic dependency on the dimension to a $\log_2 7$ exponent. Rewriting the Neumann partial sum, we reduce the upper bounds on the number of parameters by reducing the number of matrix multiplication apparitions.
We also explain how the Neural Network that approximates matrix inversion operator can be useful for solving parametric elliptic PDEs. 

As explained, there is a high interest in finding optimal upper bounds on the number of parameters for approximating maps with Neural Networks. A remaining challenge is then finding explicitly the smallest Neural Neural tasked for matrix multiplication or inversion approximation while giving its explicit construction. The answer is unknown even for some simple activation function such as $\ReLU$.


\appendix

\section{Proof of Proposition \ref{prop: MNN STR powers of 2}} \label{sec: proof of MNN STR powers of 2}

\begin{proof}
    The statement \eqref{align: MNN STR product} can be proved inductively on $k$. The base case $k = 0$, when $A$ and $B$ are $1 \times 1$ matrices, that is, scalars, is a direct consequence of \eqref{eq:Phibase} and Hypothesis \ref{hyp: product varrho realization approximated}.

    Let us suppose that Proposition \ref{prop: MNN STR powers of 2} is true for $k - 1$. Let us start by showing \eqref{align: MNN STR product}. Using the Strassen algorithm, we know that \eqref{align: product of matrices as seven products} is satisfied. Consequently, because of the triangular inequality, defining $P_i$ as in \eqref{align: Strassen products}, it suffices to show that:
    \begin{equation} \label{eq: error for the partial product}
        \left | \left [ R \lr{(}{\Phi^{\op{PAR}_{k, \varrho}}_{\varepsilon, K} \odot \Phi^{\op{SPLIT}_k}}{)}(A|B) \right ]_{(i - 1) \cdot 2^{k - 1} + 1:i \cdot 2^{k - 1}, 1: 2^{k - 1}} - P_i \right |_\infty \leq \frac{\varepsilon}{4} \quad \forall i \in \set{1, \dots, 7}.
    \end{equation}
    Let us show the estimate for $i = 1$, as the other six estimates are analogous. Since $\Abs{A}_\infty \leq K$, then $\Abs{A[1, 1] + A[2, 2]}_\infty \leq 2 K$. Similarly, since $\Abs{B}_\infty \leq K$, we have $\Abs{B[1, 1] + B[2, 2]}_\infty \leq 2 K$. Thus, using the recursive hypothesis with $k - 1$ we obtain that:
    \begin{equation*}
        \lr{\lvert}{R \left ( \Phi^{\op{STR}_{2^{k - 1}, \varrho}}_{\frac{\varepsilon}{4}, 2 K} \right ) \left ( A[1, 1] + A[2, 2] \middle | B[1, 1] + B[2, 2] \right )}{.} 
        \lr{.}{ - (A[1, 1] + A[2, 2])(B[1, 1] + B[2, 2])}{\rvert}_\infty \leq \frac{\varepsilon}{4},
    \end{equation*}
    concluding the proof of \eqref{align: MNN STR product}.
    
    To continue with, let us show that the number of weights of the Matrix Neural Network is given by \eqref{align: MNN STR number of weights}.
    We clearly have: \[
    M\left( \Phi_{\varepsilon, K}^{\op{PAR}_{k, \varrho}} \right ) = 7 M \left ( \Phi_{\frac{\varepsilon}{4}, 2K}^{\op{STR}_{2^{k - 1}, \varrho}} \right ).
    \]
    Using the previous equality, \eqref{eq: number of weights mix 1} and \eqref{eq: number of weights SPLIT 1}, we obtain recursively:
    \begin{equation} \label{eq: number of weights of STR MNN with sum}
        \begin{aligned}
            M \left ( \Phi^{\op{STR}_{2^k, \varrho}}_{\varepsilon, K} \right ) & = M \left ( \Phi_{\varepsilon, K}^{\op{PAR}_{k, \varrho}} \right ) + M \left ( \Phi^{\op{MIX}_k} \right ) + M \left ( \Phi^{\op{SPLIT}_k} \right ) \\
            & = 7 M \left ( \Phi_{\frac{\varepsilon}{4}, 2K}^{\op{STR}_{2^{k - 1}, \varrho}} \right ) + 9 \cdot 4^k \\
            & = 7 \left ( 7 M \left ( \Phi_{\frac{\varepsilon}{4^2}, 2^2K}^{\op{STR}_{2^{k - 2}, \varrho}} \right ) + 9 \cdot 4^{k - 1} \right ) + 9 \cdot 4^k \\
            & = \cdots \\
            & = 7^k M \left ( \Pi^{\varrho}_{\frac{\varepsilon}{4^k}, 2^k K} \right ) + 9 \sum_{i = 1}^k 4^i \cdot 7^{k - i}.
        \end{aligned}
    \end{equation}
    We now calculate the sum using the fact that it is geometric:
    \begin{equation} \label{eq: computation sum}
    9 \sum_{i = 1}^k 4^i \cdot 7^{k - i}=
        9 \cdot 7^k \sum_{i = 1}^k \left ( \frac{4}{7} \right )^i = 9 \cdot 7^k \frac{4 / 7 - (4 / 7)^{k + 1}}{1 - 4 / 7} = 12 \left (  7^k - 4^k \right ).
    \end{equation}
    Putting together \eqref{eq: number of weights of STR MNN with sum} and \eqref{eq: computation sum}, we obtain \eqref{align: MNN STR number of weights}.

    To finish, let us obtain the length of the Matrix Neural Network \eqref{align: MNN STR number of layers}. For that purpose, we remark that for $k \geq 1$:
    \[
    L\left ( \Phi^{\op{STR}_{2^{k}, \varrho}}_{\varepsilon, K} \right ) = L \left ( \Phi_{\frac{\varepsilon}{4}, 2 K}^{\op{STR}_{2^{k - 1}, \varrho}} \right ) + 2.
    \]
    Thus, using induction, we conclude with \eqref{align: MNN STR number of layers}.
\end{proof}

\section{Proof of Theorem \ref{th: INV MNN}} \label{sec: proof of teo INV MNN}

The first step is to define a MNN that approximates the function of squaring a matrix $N$ times:
\begin{lemma} \label{lm: SQR N times MNN}
    Suppose that Hypothesis \ref{hyp: product varrho realization approximated} is satisfied, and let $\varepsilon \in (0, \frac{1}{4})$ and $n, N \in \N$. We define $\Phi^{\op{DUP}_n}$ as the shallow MNN satisfying:
    \[
    R \left ( \Phi^{\op{DUP}_n} \right ) (A) = A | A,
    \]
    and  $\Phi_{\varepsilon}^{\op{SQR}_{n, \varrho}^N}$ as the $\varrho$-MNN satisfying:
    \begin{equation} \label{eq: SQR N times MNN definition}
        \Phi_{\varepsilon}^{\op{SQR}_{n, \varrho}^N} = \overbrace{\left(\Phi^{\op{STR}_{n, \varrho}}_{\frac{\varepsilon}{4 n}, 1} \odot \Phi^{\op{DUP}_n}\right) \odot \dots \odot \left(\Phi^{\op{STR}_{n, \varrho}}_{\frac{\varepsilon}{4 n}, 1} \odot \Phi^{\op{DUP}_n}\right)}^{N}.
    \end{equation}
    Then,
    \begin{equation} \label{eq: SQR N times MNN N times square}
    \Norm{A^{2^N} - R \left ( \Phi_{\varepsilon}^{\op{SQR}_{n, \varrho}^N} \right ) (A)}_2 \leq \varepsilon
    \end{equation}
    for all $A \in \R^{n \times n}$ such that $\Norm{A}_2 \leq \frac{1}{2}$.
\end{lemma}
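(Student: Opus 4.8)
The plan is to peel off the $N$ blocks of \eqref{eq: SQR N times MNN definition} one at a time and control, by induction, how far the running output drifts from the exact iterated square. Put $B_0 = A$ and, for $0 \le j < N$, $B_{j+1} = R\bigl(\Phi^{\op{STR}_{n,\varrho}}_{\frac{\varepsilon}{4n},1}\bigr)(B_j | B_j)$. By the composition rule $R(\Phi \odot \Psi) = R(\Phi) \circ R(\Psi)$ and the definition $R(\Phi^{\op{DUP}_n})(C) = C | C$, the matrix $B_j$ is exactly the realization of the first $j$ blocks applied to $A$, so that $R\bigl(\Phi_\varepsilon^{\op{SQR}_{n,\varrho}^N}\bigr)(A) = B_N$. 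I would then prove by induction on $j \in \{0,\dots,N\}$ the conjunction of $e_j := \Norm{B_j - A^{2^j}}_2 \le \varepsilon$ and $\Norm{B_j}_2 \le \tfrac12$. The second bound is the engine of the induction: by \eqref{eq: infinity and 2 norm equivalence constants} it yields $\Abs{B_j}_\infty \le \Norm{B_j}_2 \le 1$, which is exactly the hypothesis (with $K = 1$) under which Corollary \ref{cl: MNN Strassen for square matrices} can be applied to the $(j+1)$-th block.

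For the inductive step, Corollary \ref{cl: MNN Strassen for square matrices} applied with accuracy $\varepsilon/(4n)$ and $K = 1$, combined with \eqref{eq: infinity and 2 norm equivalence constants}, gives $\Norm{B_{j+1} - B_j^2}_2 \le n \cdot \frac{\varepsilon}{4n} = \frac{\varepsilon}{4}$. Then I would write $B_j^2 - (A^{2^j})^2 = B_j(B_j - A^{2^j}) + (B_j - A^{2^j})A^{2^j}$ and use submultiplicativity of $\Normm_2$ together with $\Norm{A^{2^j}}_2 \le \Norm{A}_2^{2^j} \le 2^{-2^j}$ and $\Norm{B_j}_2 \le \Norm{A^{2^j}}_2 + e_j$ to obtain
\[
e_{j+1} \;\le\; \frac{\varepsilon}{4} + \bigl(2\cdot 2^{-2^j} + e_j\bigr)\,e_j .
\]
For $j = 0$ this is $e_1 \le \varepsilon/4$; for $j \ge 1$ we have $2\cdot 2^{-2^j} \le \tfrac12$, so plugging in $e_j \le \varepsilon$ and using $\varepsilon < \tfrac14$ (hence $\varepsilon^2 < \varepsilon/4$) gives $e_{j+1} \le \tfrac{\varepsilon}{4} + (\tfrac12 + \varepsilon)\varepsilon \le \varepsilon$. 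Finally $\Norm{B_{j+1}}_2 \le \Norm{A^{2^{j+1}}}_2 + e_{j+1} \le 2^{-2^{j+1}} + \varepsilon \le \tfrac14 + \tfrac14 = \tfrac12$, and the base case is immediate from $\Norm{B_0}_2 = \Norm{A}_2 \le \tfrac12$; taking $j = N$ yields \eqref{eq: SQR N times MNN N times square}.

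The one genuinely delicate point is the error accounting. A naive application of the triangle inequality only bounds $e_N$ by $N\varepsilon/4$, which is useless once $N$ is large. The reason the estimate survives is that the exact orbit $A^{2^j}$ contracts doubly exponentially, so the effective amplification factor of the squaring map along the orbit, namely $\Norm{B_j}_2 + \Norm{A^{2^j}}_2 \le 2\cdot 2^{-2^j} + e_j$, drops below $1$ (in fact below $\tfrac12 + \varepsilon$) as soon as $j \ge 1$; this turns the recursion for $e_j$ into a contraction whose fixed point lies below $\varepsilon$ rather than into a sum. The hypothesis $\varepsilon < \tfrac14$ is used precisely twice: to absorb the quadratic term $e_j^2$ in the recursion and to keep $\Norm{B_j}_2 \le \tfrac12$ for every $j$. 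Should the size of $\Phi_\varepsilon^{\op{SQR}_{n,\varrho}^N}$ be needed afterwards, it follows routinely from the weight- and layer-accounting of $\odot$ applied to the $N$ copies of $\Phi^{\op{DUP}_n}$ (a one-layer network) and of $\Phi^{\op{STR}_{n,\varrho}}_{\frac{\varepsilon}{4n},1}$, via \eqref{eq: MNN STR arbitrary square weights}--\eqref{eq: MNN STR arbitrary square layers}.
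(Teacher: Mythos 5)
Your proposal is correct and follows essentially the same route as the paper's proof: both run an induction on the number of squaring blocks, both carry along the auxiliary invariant that the running iterate has $\Normm_2$-norm at most $\tfrac12$ (so that the next $\Phi^{\op{STR}_{n,\varrho}}_{\varepsilon/(4n),1}$ block can be applied with $K=1$), and both use the same algebraic decomposition $B_j^2 - (A^{2^j})^2 = B_j(B_j - A^{2^j}) + (B_j - A^{2^j})A^{2^j}$ together with the one-step Strassen error $\tfrac{\varepsilon}{4}$ and the double-exponential decay of $\Norm{A^{2^j}}_2$ to keep the accumulated error uniformly below $\varepsilon$. The only cosmetic difference is that the paper phrases the induction on $N$ with the identity $\Phi^{\op{SQR}^{N+1}} = \Phi^{\op{SQR}^1}\odot\Phi^{\op{SQR}^N}$ while you unroll the network into a running sequence $B_0,\dots,B_N$; the estimates are the same.
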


\begin{remark}
    If we define instead the above MNN as
    \[
    \Phi_{\varepsilon}^{\op{SQR}_{n, \varrho}^N} = \overbrace{\left(\Phi^{\op{STR}_{n, \varrho}}_{\frac{\varepsilon}{4 n}, \varepsilon+\frac{1}{2^{2^{N-1}}}} \odot \Phi^{\op{DUP}_n}\right) \odot \dots \odot \left(\Phi^{\op{STR}_{n, \varrho}}_{\frac{\varepsilon}{4 n},\varepsilon+\frac{1}{2^{2}}} \odot \Phi^{\op{DUP}_n}\right)}^{N - 1} \odot \Phi_{\frac{\varepsilon}{n}, \frac{1}{2}}^{\op{STR}_{n, \varrho}} \odot \Phi^{\op{DUP}_n},
    \]
    it satisfies the same property. This can be seen by analyzing in detail the computations of the proof of Lemma \ref{lm: SQR N times MNN}.
    However, for clarity and coherence with the result needed for Lemma \ref{th: NEU MNN} below, we keep the subindices homogeneous and the second one equal to $1$. 
\end{remark}

\begin{proof}
    We can prove \eqref{eq: SQR N times MNN N times square} by induction on $N$. We fix $A \in \R^{n \times n}$ with $\Norm{A}_2 \leq \frac{1}{2}$.
    
    The case $N = 1$ is obvious using \eqref{eq: MNN STR arbitrary square product} and 
\eqref{eq: infinity and 2 norm equivalence constants}.
In fact, we have: \begin{equation} \label{eq: SQR N times MNN N equals 1}
        \begin{aligned}
            \Norm{R \left ( \Phi_{\varepsilon}^{\op{SQR}_{n, \varrho}^1} \right ) (A) - A^2}_2 & = \Norm{R \left ( \Phi_{\frac{\varepsilon}{4 n}, 1}^{\op{STR}_{n, \varrho}} \right ) (A | A) - A^2}_2 \\
            & \leq n \Abs{R \left ( \Phi_{\frac{\varepsilon}{4 n},1}^{\op{STR}_{n, \varrho}} \right ) (A | A) - A^2}_\infty \leq n \frac{\varepsilon}{4 n} = \frac{\varepsilon}{4} < \varepsilon.
        \end{aligned}
    \end{equation}
    
    Suppose now that the statement \eqref{eq: SQR N times MNN N times square} holds for $N \in \N$ and let us prove the case $N + 1$. By \eqref{eq: SQR N times MNN definition}, we know that:
    \[
    \Phi_{\varepsilon}^{\op{SQR}^{N + 1}_{n, \varrho}} = \Phi_\varepsilon^{\op{SQR}^1_{n, \varrho}} \odot \Phi_{\varepsilon}^{\op{SQR}^N_{n, \varrho}}.
    \]
    By the induction hypothesis, we obtain that:
    \begin{equation} \label{eq: inductive upper bound norm}
    \Norm{R \left ( \Phi_{\varepsilon}^{\op{SQR}^N_{n, \varrho}} \right )(A)}_2 \leq \varepsilon + \Norm{A}_2^{2^N} \leq \varepsilon + \frac{1}{2^{2^N}} \leq \frac{1}{2}.
    \end{equation}
    Consequently,
    \begin{align*}
        & \Norm{A^{2^{N + 1}} - R \left ( \Phi_{\varepsilon}^{\op{STR}^{N + 1}_{n, \varrho}} \right )(A)}_2 \\
        & \leq \Norm{\left ( A^{2^N} \right )^2 - \left ( R \left ( \Phi_{\varepsilon}^{\op{SQR}^{N }_{n, \varrho}} \right ) (A) \right )^2}_2 \\
        & + \Norm{\left ( R \left ( \Phi_{\varepsilon}^{\op{SQR}^{N}_{n, \varrho}} \right ) (A) \right )^2 - R \left ( \Phi_{\varepsilon}^{\op{SQR}^{N + 1}_{n, \varrho}} \right ) (A)}_2 \\
        & = \Norm{A^{2^N} \left [ A^{2^N} - R \left ( \Phi_{\varepsilon}^{\op{SQR}^{N }_{n, \varrho}} \right ) (A) \right ] + \left [ A^{2^N} - R \left ( \Phi_{\varepsilon}^{\op{SQR}^{N }_{n, \varrho}} \right ) (A) \right ] R \left ( \Phi_{\varepsilon}^{\op{SQR}^{N }_{n, \varrho}} \right ) (A)}_2 \\
        & + \Norm{\left ( R \left ( \Phi_{\varepsilon}^{\op{SQR}^{N}_{n, \varrho}} \right ) (A) \right )^2 - R \left (\Phi_{\varepsilon}^{\op{SQR}^1_{n, \varrho}} \right ) \left ( R \left ( \Phi_{\varepsilon}^{\op{SQR}^{N}_{n, \varrho}} \right ) (A) \right )}_2 \\
        & \leq \Norm{A^{2^N} - R \left ( \Phi_{\varepsilon}^{\op{SQR}^{N}_{n, \varrho}} \right ) (A)}_2 \left (\Norm{A^{2^N}}_2 + \Norm{R \left ( \Phi_{\varepsilon}^{\op{SQR}^{N }_{n, \varrho}} \right ) (A)}_2 \right ) + \frac{\varepsilon}{4} \\
        & \leq \varepsilon \left ( \Norm{A}_2^{2^N} + \Norm{R \left ( \Phi_{\varepsilon}^{\op{SQR}^{N}_{n, \varrho}} \right ) (A)}_2 \right ) + \frac{\varepsilon}{4} \\
        & \leq \varepsilon \left ( \frac{1}{2^{2^N}} + \frac{1}{2} \right ) + \frac{\varepsilon}{4} \leq \varepsilon,
    \end{align*}
    where we used the inductive hypothesis \eqref{eq: SQR N times MNN N times square} and also used \eqref{eq: SQR N times MNN N equals 1} since \eqref{eq: inductive upper bound norm} holds.
\end{proof}


Now, we construct a MNN that approximates the product \eqref{eq: Neumann sum as product}.

\begin{lemma} \label{th: NEU MNN}
    Suppose that Hypothesis \ref{hyp: product varrho realization approximated} is satisfied and let $\varepsilon \in (0, \frac{1}{8})$ and $n, N \in \N$. Then, if $N \geq 2$ there exists a $\varrho$-MNN $\Phi_{\varepsilon}^{\op{NEU}_{n, \varrho}^N}$ satisfying:
    \begin{equation} \label{eq: NEU MNN Neumann series}
    \Norm{\sum_{k = 0}^{2^N - 1} A^k - R \left ( \Phi_{\varepsilon}^{\op{NEU}_{n, \varrho}^N} \right ) (A)}_2 \leq \varepsilon
    \end{equation}
    for all $A \in \R^{n \times n}$ such that $\Norm{A}_2 \leq 1$, satisfying:
    \begin{equation}
    \begin{split}
        M \left ( \Phi_{\varepsilon}^{\op{NEU}_{n, \varrho}^N} \right ) & \leq 14 n^{\log_2 7} (N - 1) \left ( M \left ( \Pi_{2^{-2^N} \frac{\varepsilon}{8 n^3}, 2n}^\varrho \right ) + 12 \right ) \\
        & + n^2 \left ( L \left ( \Pi_{2^{-2^N} \frac{\varepsilon}{8 n^3}, 2n}^\varrho \right ) - 14 N + 19 + 2 \log_2 n\right ) + n N, 
    \end{split} \label{eq: NEU MNN weights}
    \end{equation}
    and
    \begin{equation}
        L \left ( \Phi_{\varepsilon}^{\op{NEU}_{n, \varrho}^N} \right )  \leq N \left [ 2\log_2 n + 5 + L \left ( \Pi_{2^{-2^N} \frac{\varepsilon}{8 n^3}, 2 n}^\varrho \right ) \right]. \label{eq: NEU MNN layers}
    \end{equation}
    Also, if $N = 1$  there exists a $\varrho$-MNN $\Phi_{\varepsilon}^{\op{NEU}_{n, \varrho}^N}$ satisfying \eqref{eq: NEU MNN Neumann series} for $\varepsilon=0$ and all $A\in\R^{n\times n}$, and satisfying:
    \begin{align}
        M \left ( \Phi_{\varepsilon}^{\op{NEU}_{n, \varrho}^1} \right ) & = n^2 + n,\label{eq: NEU MNN N equals 1 weights} \\
        L \left ( \Phi_{\varepsilon, K}^{\op{NEU}_{n, \varrho}^1} \right ) & = 1. \label{eq: NEU MNN N equals 1 layers}
    \end{align}
\end{lemma}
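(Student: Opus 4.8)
The plan is to realize the right-hand side of the identity \eqref{eq: Neumann sum as product}, namely $\sum_{k=0}^{2^N-1}A^k=\prod_{k=0}^{N-1}\left(A^{2^k}+\op{I}_n\right)$, by a network that carries along two matrix ``registers'': one holding an approximation of the power $A^{2^k}$, one holding an approximation of the partial product $\prod_{j<k}\left(A^{2^j}+\op{I}_n\right)=\sum_{i=0}^{2^k-1}A^i$. At stage $k$ the power register is updated by one squaring --- a shallow $\op{DUP}$-type copy followed by a $\Phi^{\op{STR}_{n,\varrho}}$ --- and the product register is updated by multiplying it by $A^{2^k}+\op{I}_n$, again with a $\Phi^{\op{STR}_{n,\varrho}}$, the additive $\op{I}_n$ being folded into the affine part of that layer. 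Crucially, both updates at stage $k$ depend only on the two registers produced at stage $k-1$, so they sit in parallel via $\cali{P}(\cdot,\cdot)$; hence one stage costs one $\Phi^{\op{STR}_{n,\varrho}}$ in depth plus a bounded amount of affine glue, and there are $N$ stages, matching \eqref{eq: NEU MNN layers} with $L(\Pi)+2\log_2 n+4$ layers per $\Phi^{\op{STR}_{n,\varrho}}$ (by \eqref{eq: MNN STR arbitrary square layers}) plus one more for the glue. Because the power chain and the very first product step are each ``off by one'', exactly $2(N-1)$ of the matrix multiplications are nontrivial, which is the origin of the factor $14\,n^{\log_2 7}(N-1)$: two $\Phi^{\op{STR}_{n,\varrho}}$'s per stage, each contributing $7\,n^{\log_2 7}\bigl(M(\Pi)+12\bigr)$ weights by Corollary \ref{cl: MNN Strassen for square matrices}. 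For $N=1$ there is nothing to multiply: $\Phi^{\op{NEU}_{n,\varrho}^1}$ is the single affine layer $A\mapsto A+\op{I}_n$, exact for every $A$, with $M=n^2+n$ (the identity tensor has $n^2$ nonzero entries, the bias $\op{I}_n$ has $n$) and $L=1$, giving \eqref{eq: NEU MNN N equals 1 weights}--\eqref{eq: NEU MNN N equals 1 layers}.

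\emph{Error analysis.} Write $\widehat P_k,\widehat Q_k$ for the computed register contents and $P_k=A^{2^k}$, $Q_k=\sum_{i=0}^{2^k-1}A^i$ for the exact values; the powers are obtained by repeated squaring in the spirit of Lemma \ref{lm: SQR N times MNN}, reworked for the weaker hypothesis $\Norm{A}_2\le 1$ (so that $\Norm{P_k}_2\le 1$) rather than cited directly. Note $\Norm{Q_k}_2\le 2^k\le 2^N$. By \eqref{eq: MNN STR arbitrary square product} together with \eqref{eq: infinity and 2 norm equivalence constants}, each $\Phi^{\op{STR}_{n,\varrho}}$ introduces an additive $2$-norm error of at most $\eta:=n\cdot(\text{chosen }\infty\text{-accuracy})$. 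Combining this with the bilinear bound $\Norm{XY-\widehat X\widehat Y}_2\le\Norm{X-\widehat X}_2\Norm{Y}_2+\Norm{\widehat X}_2\Norm{Y-\widehat Y}_2$ and inducting on $k$: the power errors $e_k=\Norm{\widehat P_k-P_k}_2$ satisfy $e_{k+1}\le\eta+(2+e_k)e_k$, so $e_k\lesssim\eta\,3^{N}$ while $\eta$ is small, and the product errors $f_k=\Norm{\widehat Q_k-Q_k}_2$ satisfy $f_{k+1}\le\eta+\Norm{\op{I}_n+\widehat P_k}_2 f_k+\Norm{Q_k}_2 e_k\le\eta+3f_k+2^N e_k$, whence $f_N\lesssim\eta\,C^{N}$ for an absolute constant $C$. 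Choosing the accuracy of the underlying $\Pi$ equal to $2^{-2^N}\tfrac{\varepsilon}{8n^3}$ --- the factor $n^3$ and the norm parameter $2n$ being precisely what Corollary \ref{cl: MNN Strassen for square matrices} forces once the $\Phi^{\op{STR}_{n,\varrho}}$'s are instantiated with a norm bound of order $1$ --- makes $\eta\,C^N\le\varepsilon$ for every $N\ge 1$, since $C^N\le 2^{2^N}$ on the relevant range; this yields \eqref{eq: NEU MNN Neumann series}.

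\emph{Size bookkeeping.} Summing the $2(N-1)$ copies of $\Phi^{\op{STR}_{n,\varrho}}$ via \eqref{eq: MNN STR arbitrary square weights}--\eqref{eq: MNN STR arbitrary square layers} and the additive behaviour of $M$ and $L$ under $\odot$ and $\cali{P}$, and adding the $O(n^2)$ weights and $O(1)$ layers contributed per stage by the $\op{DUP}$-copy, the parallelization, and the affine $+\op{I}_n$ step, produces \eqref{eq: NEU MNN weights} after collecting terms: the $n^2\bigl(L(\Pi)-14N+19+2\log_2 n\bigr)$ term absorbs the identity layers inserted to synchronize the two parallel branches (each costing $n^2$ weights), the $\Phi^{\op{EXT}}/\Phi^{\op{SHR}}$ padding inside the $\Phi^{\op{STR}_{n,\varrho}}$'s, and the $-9n^2$ per-multiplication corrections of \eqref{eq: MNN STR arbitrary square weights}, while the $nN$ term collects the $N$ bias vectors $\op{I}_n$; the layer count \eqref{eq: NEU MNN layers} follows directly from the per-stage depth described above.

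\emph{Main obstacle.} The delicate point is the joint induction that simultaneously controls, stage by stage, (i) the accumulated $2$-norm errors $e_k,f_k$ in \emph{both} registers and (ii) the $\infty$-norms of the matrices actually fed into each $\Phi^{\op{STR}_{n,\varrho}}$, which must remain below the norm parameter used to build that subnetwork --- the two being coupled, since an error in a register inflates its norm and hence the norm bound required at the next multiplication. One must pick the internal accuracies small enough to absorb the amplification (single-exponential in $N$) coming from $\Norm{Q_k}_2\le 2^N$ and from the repeated squarings, while arranging that this costs only a logarithm of it inside $M(\Pi)$; the rest is a careful but routine application of the concatenation/parallelization accounting.
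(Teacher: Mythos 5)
Your proposal captures the right overall architecture — two parallel registers (powers and partial products), one squaring plus one product per stage, $2(N-1)$ genuine matrix multiplications, and the $N=1$ base case as a single affine layer $A\mapsto A+\Eye_n$ with $n^2+n$ weights. But there is a genuine gap in the error analysis that the paper resolves with a specific algebraic trick you omit, and without that trick your construction cannot produce the stated norm parameter $2n$ inside $\Pi$.

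You correctly observe that the partial products satisfy $\Norm{Q_k}_2\le 2^k$, so by stage $N$ the product register holds a matrix of $2$-norm up to $2^N$. But you then instantiate each $\Phi^{\op{STR}_{n,\varrho}}$ ``with a norm bound of order $1$'' — i.e. $K=1$, giving the $\Pi$-norm-parameter $2n$ appearing in \eqref{eq: NEU MNN weights}--\eqref{eq: NEU MNN layers}. Corollary~\ref{cl: MNN Strassen for square matrices} only guarantees $\infty$-norm accuracy $\varepsilon$ when $\Abs{A}_\infty,\Abs{B}_\infty\le K$; feeding a register whose $\infty$-norm is up to $2^N$ into a Strassen subnetwork built for $K=1$ voids that guarantee, so the per-stage increment $\eta$ in your recursion $f_{k+1}\le\eta+3f_k+2^N e_k$ is unjustified. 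If instead you chose $K\approx 2^N$ to legitimize the bound, the $\Pi$-norm-parameter would become $\approx 2n\cdot 2^N$, not $2n$, and the weight/layer formulas would change. The paper avoids this dilemma by rewriting
\[
\prod_{k = 0}^N  \left ( A^{2^k} + \Eye_n  \right ) = 2^{2^{N + 1} - 1} \prod_{k = 0}^N \left[\left(\tfrac{A}{2}\right)^{2^k}+\left(\tfrac{\Eye_n}{2}\right)^{2^k} \right]
\]
(equation \eqref{eq: Neumann series as product with factor}), so the power register carries $(A/2)^{2^k}$ with $2$-norm $\le 2^{-2^k}$ and the partial product $P^i(A)$ satisfies $\Norm{P^i(A)}_2\le 2^{i+2-2^{i+1}}\le\frac12$ — all intermediate inputs to the Strassen subnetworks genuinely lie in $[-1,1]^{n\times n}$, so $K=1$ is legitimate, and the factor $2^{2^N-1}$ is applied once in the last affine layer at no extra cost in depth and only a modification of the final weights. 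The $2^{-2^N}$ in the $\Pi$ accuracy parameter then arises because the internal target error is $2^{1-2^N}\varepsilon$ to compensate the terminal rescaling, not (as your ``$C^N\le 2^{2^N}$'' heuristic suggests) to absorb geometric error amplification; in fact the paper's error recursion is a clean $\le\varepsilon$ at every stage with no growing factor. To fix your proof you must introduce this rescaling, after which your bookkeeping matches the paper's Steps 5--6 essentially line by line.
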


Part of the improvements with respect to the paper \cite{kutyniok2022theoretical} can be found in the Master Thesis \cite{romera2024neuralnetworksnumericalanalysis}

\begin{proof}
    We split the proof into 6 steps:
    \begin{itemize}
        \item Step 1: Proof of $N = 1$.
        \item Step 2: Informal definition of the  MNN.
        \item Step 3: Formal definition of the  MNN.
        \item Step 4: Proof of \eqref{eq: NEU MNN Neumann series} for $N \geq 2$.
        \item Step 5: Proof of \eqref{eq: NEU MNN weights} for $N \geq 2$.
        \item Step 6: Proof of \eqref{eq: NEU MNN layers} for $N \geq 2$.
    \end{itemize}
    
    \vspace{.5cm}
    \textbf{Step 1:} Proof of $N = 1$. We define $\Phi^{\op{NEU}^1_{n, \varrho}}_{\varepsilon} = ((\op{id}, \Eye_n, \op{id}))$, from where the result follows easily.    

    \textbf{Step 2:} Informal definition of the MNN.
    We now suppose that $N \geq 2$. The general idea is to use the product formula introduced in \eqref{eq: Neumann sum as product} for the Neumann series.
    However, by our construction of $\Phi^{\op{STR}}$, we require to bound the norm of the approximation of $\Eye_n + A^{2^k}$ and $A^{2^k}$ for $k \in \set{0, \dots, N}$ for an accurate product. For that purpose, we rewrite \eqref{eq: Neumann sum as product} as follows:
    \begin{equation} \label{eq: Neumann series as product with factor}
    \prod_{k = 0}^N  \left ( A^{2^k} + \Eye_n  \right ) = 2^{\sum_{k = 0}^N 2^k}\prod_{k = 0}^N 2^{-2^k} \left (A^{2^k}+ \Eye_n \right ) = 2^{2^{N + 1} - 1} \prod_{k = 0}^N \left[\left(\frac{A}{2}\right)^{2^k}+\left(\frac{\Eye_n}{2}\right)^{2^k} \right].
    \end{equation}
    We do this to have upper bounds of the approximations with the cost of the power of 2 as a factor.
    \paragraph{}
    The target MNN will then have two branches: one branch responsible for approximating $\left (\frac{A}{2} \right )^{2^k}$ for each $k \in \set{0, \dots, N}$ like in Lemma \ref{lm: SQR N times MNN}, while the other approximates the target product using recursively the different exponents of $\frac{A}{2}$.
    Our plan is then to follow the sketch:
    \begin{equation} \label{eq: informal sketch}
        \begin{aligned}
            A & \mapsto \begin{pmatrix} \frac{A}{2} & | & \frac{A}{2} \\ \frac{A}{2} & | & 0 \end{pmatrix} \mapsto \begin{pmatrix} \left (\frac{A}{2} \right )^2 \\ \frac{A}{2} + \frac{\Eye_n}{2} \end{pmatrix} 
            \mapsto \begin{pmatrix} \left ( \frac{A}{2} \right )^4 \\  \left ( \frac{A}{2} + \frac{\Eye_n}{2} \right)\left ( \left ( \frac{A}{2} \right )^2 + \frac{1}{2^2}\Eye_n  \right) \end{pmatrix} \mapsto \cdots 
            \\&\mapsto \begin{pmatrix}\left(\frac{A}{2}\right)^{2^N}\\ \prod_{k = 0}^{N - 1} \left ( \left ( \frac{A}{2} \right )^{2^k} + \frac{1}{2^{2^k}}\Eye_n \right )
             \end{pmatrix}   
            \mapsto 
             2^{2^{N + 1} - 1} \prod_{k = 0}^{N} \left ( \left ( \frac{A}{2} \right )^{2^k} + \frac{1}{2^{2^k}}\Eye_n \right). 
        \end{aligned}
    \end{equation}
    In this way, we compute $\left(\frac{A}{2}\right)^{2^k}$ only once for every $k \in \N$.

    \textbf{Step 3:} Formal definition of the MNN.    
    We define some  auxiliary MNN satisfying:
    \begin{align*}
        R \left ( \Phi^{\op{DUP}}_n \right )(A) & = \frac{1}{2} \begin{pmatrix} A & | & A \\ A & | & 0 \end{pmatrix}, \\
        R \left ( \Phi^{\op{FILL}}_{n, L} \right )(A | B) & = A + \frac{\Eye_n}{2}, \\
        R \left ( \Phi^{\op{FLIP}}_{n, k} \right ) \begin{pmatrix} A \\ B \end{pmatrix} & = (A + 2^{-2^k} \Eye_n )| B, \\
        R \left ( \Phi^{\op{MIX}}_{n, k} \right ) \begin{pmatrix} A \\ B \end{pmatrix} & = \begin{pmatrix} A & | & A \\ A + 2^{-2^k} \Eye_n & | & B \end{pmatrix},
    \end{align*}
    for all $A, B \in \R^{n \times n}$, $k, L \in \N$ with
    \begin{equation} \label{eq: number of weights and layers of auxiliary MNN upper bounds}
    \begin{aligned}
        M \left ( \Phi^{\op{DUP}}_n \right ) & = 3 n^2, & L \left ( \Phi^{\op{DUP}}_n \right ) & = 1, \\
        M \left ( \Phi^{\op{FILL}}_{n, L} \right ) & = n^2 L + n, & L \left ( \Phi^{\op{FILL}}_{n, L} \right ) & = L, \\
        M \left ( \Phi^{\op{FLIP}}_{n, k} \right ) & = 2 n^2 + n, & L \left ( \Phi^{\op{FLIP}}_{n, k} \right ) & = 1,\\
        M \left ( \Phi^{\op{MIX}}_{n, k} \right ) & = 4 n^2 + n, & L \left ( \Phi^{\op{MIX}}_{n, k} \right ) & = 1.
    \end{aligned}
    \end{equation}
    The MNN $\Phi_{n, L}^{\op{FILL}}$ is artificially elongated in order to parallelize it with a MNN with more than 1 layer.
    We can then define recursively that:
    \begin{equation}\label{eq:defPhiAux}
    \begin{cases}
        \Phi^{\op{AUX}^1_{n, \varrho}}_{\varepsilon}  = \cali{P} \left ( \Phi^{\op{STR}_{n, \varrho}}_{\frac{\varepsilon}{4 n}, 1}, \Phi^{\op{FILL}}_{n, L\left ( \Phi^{\op{STR}_{n, \varrho}}_{\frac{\varepsilon}{4 n}, 1} \right )} \right ) \odot \Phi^{\op{DUP}}_n, \\
        \Phi^{\op{AUX}^i_{n, \varrho}}_{\varepsilon}  = \cali{P} \left ( \Phi^{\op{STR}_{n, \varrho}}_{\frac{\varepsilon}{4 n}, 1}, \Phi^{\op{STR}_{n, \varrho}}_{\frac{\varepsilon}{4 n}, 1} \right ) \odot \Phi^{\op{MIX}}_{n, i - 1} \odot \Phi^{\op{AUX}^{i - 1}_{n, \varrho}}_{\varepsilon} &\forall i\geq2.
    \end{cases} 
    \end{equation}
    If we define: 
    \[
    \left ( (\LL^1, C^1, \alpha^1), \dots, (\LL^L, C^L, \alpha^L) \right ) \coloneq \Phi^{\op{STR}_{n, \varrho}}_{2^{1-2^{N}} \frac{\varepsilon}{4 n},1} \odot \Phi^{\op{FLIP}}_{n, N - 1} \odot \Phi^{\op{AUX}^{N - 1}_{n, \varrho}}_{2^{1-2^N} \varepsilon},
    \] 
    where we have replaced $\varepsilon$ by $2^{1-2^{N}} \varepsilon$, then the target $\varrho$-MNN is:
    \[
    \Phi_\varepsilon^{\op{NEU}^N_{n, \varrho}} \coloneq \left ( (\LL^1, C^1, \alpha^1), \dots, \left ( 2^{2^N-1} \LL^L, 2^{2^N-1} C^L, \alpha^L \right ) \right ).
    \]

    \paragraph{}
    For legibility, we introduce some notation useful for the next steps:
    \begin{align}
        \op{SQR}_\varepsilon^i(A) & = \left [ R \left ( \Phi_\varepsilon^{\op{AUX}^i_{n, \varrho}} \right ) (A) \right ]_{1: n, 1: n},\label{eq:sqrepsdefalt} \\
        \op{PROD}_\varepsilon^i(A) & = \left [ R \left ( \Phi_\varepsilon^{\op{AUX}^i_{n, \varrho}} \right ) (A) \right ]_{(n + 1): 2n, 1: n},\notag \\
        P^i(A) & = \prod_{k = 0}^i \left( \left(\frac{A}{2}\right)^{2^k}+  \left(\frac{\Eye_n}{2}\right)^{2^k} \right )\notag
    \end{align}
    for all $i\in \N$ and $A \in \R^{n \times n}$. It is not difficult to see using \eqref{eq: SQR N times MNN definition} and \eqref{eq:defPhiAux} that:
    \begin{equation}\label{eq:SqriA=R()}
    \op{SQR}^i_\varepsilon (A) = R \left ( \Phi_{\varepsilon}^{\op{SQR}^i_{n, \varrho}}\right )\left(\frac{A}{2}\right),
    \end{equation}
    where $\Phi_\varepsilon^{\op{SQR}^i_{n, \varrho}}$ is the $\varrho$-MNN from Lemma \ref{lm: SQR N times MNN}.
    Thus, we know by \eqref{eq: SQR N times MNN N times square} that: \begin{equation} \label{eq: SQR norm 2 upper bound}
        \Norm{\op{SQR}^i_\varepsilon(A)}_2 \leq \varepsilon + \Norm{\frac{A}{2}}_2^{2^i} \leq \varepsilon + 2^{-2^i}
    \end{equation}
    for all $i\in \N$ and $A \in \R^{n \times n}$ such that $\Norm{A}_2 \leq 1$. It is also easy to check that:
    \begin{equation} \label{eq: PN norm 2 upper bound}
        \Norm{P^i(A)}_2 \leq \prod_{k = 0}^i \frac{2}{2^{2^k}} = 2^{i + 2 - 2^{i + 1}} \leq \frac{1}{2}
    \end{equation}
    for all $i \in \N$ and $A \in \R^{n \times n}$ with $\Norm{A}_2 \leq 1$. 

    \begin{figure}[t]
        \centering
        \begin{tikzpicture}
            \node (A) at (0, 0) {$A$};
            \node[node distance = 24pt, draw, minimum height = 96pt, minimum width = 48pt] (DUP) [right=of A] {};
            \draw[|-] (A.east) -- (DUP.west);
            \node[above = 16pt of DUP.east] (aboveDUPeast) {};
            \node[below = 16pt of DUP.east] (belowDUPeast) {};
            \node[node distance = 24pt] (midAUX1) [right=of DUP.east] {};
            \path (midAUX1) -- node[font = \tiny] (AA) {$\frac{1}{2} A | A$} (midAUX1.center |- DUP.north);
            \path (midAUX1) -- node[font = \tiny] (A0) {$\frac{1}{2} A | 0$} (midAUX1.center |- DUP.south);
            \draw[->, anchor = west] (DUP.east |- AA) -- (AA);
            \draw[->, anchor = west] (DUP.east |- A0) -- (A0);
            \node[node distance = 24pt, draw, minimum height = 96pt, minimum width = 48pt, anchor = left] (STR1FILL) [right=of midAUX1] {};
            \path (STR1FILL.center) -- node[inner sep = 0pt] (STR1) {$\Phi^{\op{STR}_{n, \varrho}}_{\frac{\varepsilon}{4 n}, \frac{1}{2}}$} (STR1FILL.north);
            \path (STR1FILL.center) -- node[inner sep = 0pt] (FILL) {$\Phi^{\op{FILL}}_{n, L}$} (STR1FILL.south);
            \draw[dashed] (STR1FILL.west) -- (STR1FILL.east);
            \draw[densely dotted, color = gray, rounded corners] (DUP.west) -- (DUP.center) -- (DUP.center |- AA) -- (DUP.east |- AA);
            \draw[densely dotted, color = gray, rounded corners] (DUP.west) -- (DUP.center) -- (DUP.center |- A0) -- (DUP.east |- A0);
            \node[node distance = 24pt] at (DUP) {$\Phi^{\op{DUP}}_n$};
            \draw[|-] (AA) -- (STR1FILL.west |- AA);
            \draw[|-] (A0) -- (STR1FILL.west |- A0);
            \node[node distance = 32pt] (endAUX1) [right=of STR1FILL.east] {};
            \node[font = \tiny] (SQR1) at (endAUX1 |- STR1) {$\op{SQR}^1_\varepsilon(A)$};
            \node[font = \tiny] (PROD1) at (endAUX1 |- FILL) {$\op{PROD}^1_\varepsilon(A)$};
            \draw[->] (STR1FILL.east |- STR1) -- (SQR1);
            \draw[->] (STR1FILL.east |- FILL) -- (PROD1);
            \node[node distance = 64 pt, draw, minimum height = 96pt, minimum width = 48pt] (MIX1) [below=of DUP.center] {};
            \node[node distance = 56 pt] (midAUX2) [right=of MIX1.east] {};
            \path (midAUX2) -- node[font = \tiny] (SQR1SQR1) {$\op{SQR}^1_\varepsilon(A) | \op{SQR}^1_\varepsilon(A)$} (midAUX2 |- MIX1.north);
            \path (midAUX2) -- node[font = \tiny] (SQR1PROD1) {$\op{SQR}^1_\varepsilon(A) + \frac{\Eye_n}{2^{2^1}} | \op{PROD}^1_\varepsilon(A)$} (midAUX2 |- MIX1.south);
            \path (DUP.south) -- node[pos = .33] (AUX121) {} (MIX1.north);
            \path (DUP.south) -- node[pos = .67] (AUX122) {} (MIX1.north);
            \path (PROD1.east |- STR1FILL) -- node[pos = 1] (AUX123) {} +(12pt, 0);
            \path (MIX1.west) -- node[pos = 1] (AUX124) {} +(-12pt, 0);
            \path (PROD1.east |- STR1FILL) -- node[pos = 1] (AUX125) {} +(24pt, 0);
            \path (MIX1.west) -- node[pos = 1] (AUX126) {} +(-24pt, 0);
            \draw[|-, rounded corners] (SQR1) -- (PROD1.east |- SQR1) -- (AUX123 |- SQR1) -- (AUX123 |- AUX121) -- (MIX1.west |- AUX121) -- (AUX124 |- AUX121) -- (AUX124 |- SQR1SQR1) -- (MIX1.west |- SQR1SQR1);
            \draw[|-, rounded corners] (PROD1) -- (AUX125 |- PROD1) -- (AUX125 |- AUX122) -- (AUX126 |- AUX122) -- (AUX126 |- SQR1PROD1) -- (MIX1.west |- SQR1PROD1);
            \draw[->] (MIX1.east |- SQR1SQR1) -- (SQR1SQR1);
            \draw[->] (MIX1.east |- SQR1PROD1) -- (SQR1PROD1);
            \draw[color = gray, densely dotted, rounded corners] (MIX1.west |- SQR1SQR1) -- (MIX1.east |- SQR1SQR1);
            \draw[color = gray, densely dotted, rounded corners] (MIX1.west |- SQR1SQR1) -- (MIX1 |- SQR1SQR1) -- (MIX1 |- SQR1PROD1) -- (MIX1.east |- SQR1PROD1);
            \draw[color = gray, densely dotted] (MIX1.west |- SQR1PROD1) -- (MIX1.east |- SQR1PROD1);
            \node at (MIX1) {$\Phi^{\op{MIX}}_{n, 1}$};
            \node[node distance = 56pt, draw, minimum height = 96pt, minimum width = 48pt, anchor = center] (STRSTR) [right=of midAUX2] {};
            \draw[|-] (SQR1SQR1) -- (STRSTR.west |- SQR1SQR1);
            \draw[|-] (SQR1PROD1) -- (STRSTR.west |- SQR1PROD1);
            \draw[dashed] (STRSTR.west) -- (STRSTR.east);
            \path (STRSTR.center) -- node[inner sep = 0] (STR2) {$\Phi_{\frac{\varepsilon}{4n}, 1}^{\op{STR}_{n, \varrho}}$} (STRSTR.north);
            \path (STRSTR.center) -- node[inner sep = 0] (STR3) {$\Phi_{\frac{\varepsilon}{4n}, 1}^{\op{STR}_{n, \varrho}}$} (STRSTR.south);
            \node[node distance = 32pt] (endAUX2) [right=of STRSTR.east] {};
            \path (endAUX2.center) -- node[font = \tiny] (SQR2) {$\op{SQR}^2_\varepsilon(A)$} (endAUX2 |- STRSTR.north);
            \path (endAUX2.center) -- node[font = \tiny] (PROD2) {$\op{PROD}^2_\varepsilon(A)$} (endAUX2 |- STRSTR.south);
            \draw[->] (STRSTR.east |- SQR2) -- (SQR2);
            \draw[->] (STRSTR.east |- PROD2) -- (PROD2);
            \node[node distance = 64pt, draw, minimum height = 96pt, minimum width = 48pt] (MIXN) [below=of MIX1.center] {};
            \node[node distance = 56pt] (midAUXN) [right=of MIXN.east] {};
            \path (midAUXN) -- node[font = \tiny] (SQRNSQRN) {$\op{SQR}^{i-1}_\varepsilon(A) | \op{SQR}^{i-1}_\varepsilon(A)$} (midAUXN |- MIXN.north);
            \path (midAUXN) -- node[font = \tiny] (SQRNPRODN) {\scalebox{.8}{$\op{SQR}^{i-1}_\varepsilon(A) + \frac{\Eye_n}{2^{2^{i-1}}} \Big| \op{PROD}^{i-1}_\varepsilon(A)$}} (midAUXN |- MIXN.south);
            \path (PROD2.east |- MIXN.north) -- node[pos = .2] (AUXN1) {} node[pos = .4] (AUXN2) {} node[pos = .6] (AUXN3) {} node[pos = .8] (AUXN4) {} (MIXN.north west);
            \path (MIXN.north) -- node[pos = .33] (AUXN5) {} node[pos = .67] (AUXN6) {} (MIX1.south);
            \path (PROD2.east |- STRSTR) -- node[pos = .5] (AUXN7) {} node[pos = 1] (AUXN8) {} +(24pt, 0);
            \draw[|-, rounded corners] (PROD2) -- (AUXN8 |- STR3) -- (AUXN8 |- AUXN5) -- (AUXN1 |- AUXN5);
            \draw[loosely dash dot] (AUXN1 |- AUXN5) -- (AUXN2 |- AUXN5);
            \draw[loosely dash dot] (AUXN3 |- AUXN5) -- (AUXN4 |- AUXN5);
            \path (MIXN.west) -- node[pos = .5] (AUXN9) {} node[pos = 1] (AUXN10) {} +(-24pt, 0);
            \draw[rounded corners] (AUXN4 |- AUXN5) -- (AUXN10 |- AUXN5) -- (AUXN10 |- SQRNPRODN) -- (MIXN.west |- SQRNPRODN);
            \draw[|-, rounded corners] (SQR2) -- (AUXN7 |- STR2) -- (AUXN7 |- AUXN6) -- (AUXN1 |- AUXN6);
            \draw[loosely dash dot] (AUXN1 |- AUXN6) -- (AUXN2 |- AUXN6);
            \draw[loosely dash dot] (AUXN3 |- AUXN6) -- (AUXN4 |- AUXN6);
            \draw[rounded corners] (AUXN4 |- AUXN6) -- (AUXN9 |- AUXN6) -- (AUXN9 |- SQRNSQRN) -- (MIXN.west |- SQRNSQRN);
            \draw[color = gray, densely dotted] (MIXN.west |- SQRNSQRN) -- (MIXN.east |- SQRNSQRN);
            \draw[color = gray, densely dotted] (MIXN.west |- SQRNPRODN) -- (MIXN.east |- SQRNPRODN);
            \draw[color = gray, densely dotted, rounded corners] (MIXN.west |- SQRNSQRN) -- (MIXN.center |- SQRNSQRN) -- (MIXN.center |- SQRNPRODN) -- (MIXN.east |- SQRNPRODN);
            \node at (MIXN) {$\Phi_{n, i-1}^{\op{MIX}}$};
            \draw[->] (MIXN.east |- SQRNSQRN) -- (SQRNSQRN);
            \draw[->] (MIXN.east |- SQRNPRODN) -- (SQRNPRODN);
            \node[node distance = 56pt, draw, minimum height = 96pt, minimum width = 48pt] (STRSTR2) [right=of midAUXN] {};
            \draw[|-] (SQRNSQRN) -- (STRSTR2.west |- SQRNSQRN);
            \draw[|-] (SQRNPRODN) -- (STRSTR2.west |- SQRNPRODN);
            \draw[dashed] (STRSTR2.west) -- (STRSTR2.east);
            \path (STRSTR2.center) -- node[inner sep = 0, anchor = mid] (STR4) {$\Phi_{\frac{\varepsilon}{4n}, 1}^{\op{STR}_{n, \varrho}}$} (STRSTR2.north);
            \path (STRSTR2.center) -- node[inner sep = 0, anchor = mid] (STR5) {$\Phi_{\frac{\varepsilon}{4n}, 1}^{\op{STR}_{n, \varrho}}$} (STRSTR2.south);
            \node[node distance = 64pt] (endAUXN) [right=of STRSTR2.east] {};
            \path (endAUXN.center) -- node (SQRN) {$\op{SQR}^{i}_\varepsilon(A)$} (endAUXN |- STRSTR2.north);
            \path (endAUXN.center) -- node (PRODN) {$\op{PROD}^{i}_\varepsilon(A)$} (endAUXN |- STRSTR2.south);
            \draw[->] (STRSTR2.east |- SQRN) -- (SQRN);
            \draw[->] (STRSTR2.east |- PRODN) -- (PRODN);
            \path (AUXN2 |- AUXN5) -- node {$\cdots$} (AUXN3 |- AUXN6);
        \end{tikzpicture}
        \caption{Matrix Neural Network scheme of $\Phi^{\op{AUX}^{i}_{n, \varrho}}_\varepsilon$ for all $i\geq2$.}
        \label{fig: MNN AUX scheme}
    \end{figure}

    Figure \ref{fig: MNN AUX scheme} visualizes the process that follows an input matrix $A$ in the MNN $\Phi^{\op{AUX}^{i}_{n, \varrho}}_\varepsilon$. Basically, the operations that appear in the informal sketch \eqref{eq: informal sketch} are replaced by the approximations performed by the MNNs introduced previously in a systematic way.

    \textbf{Step 4:} Proof of \eqref{eq: NEU MNN Neumann series} for $N \geq 2$.
  Equations \eqref{eq: SQR N times MNN N times square} and \eqref{eq:SqriA=R()} give us
    \[
    \Norm{\left(\frac{A}{2}\right)^{2^i} - \op{SQR}^i_\varepsilon(A)}_2 \leq \varepsilon
    \]
    for all $i\in\N$ and $A \in \R^{n \times n}$ such that $\Norm{A}_2 \leq 1$.
    
    Let us see by induction over $i \geq 2$ that:
    \begin{equation} \label{eq: MNN AUX approximates Neu except by constant}
        \Norm{P^{i-1}(A) - \left [ R \left ( \Phi^{\op{AUX}^i_{n, \varrho}}_{\varepsilon} \right )(A) \right ]_{n + 1: 2n, 1: n}}_2 \leq \varepsilon
    \end{equation}
    for all $A \in \R^{n \times n}$ such that $\Norm{A}_2 \leq 1$. 

    We start with the base case $i = 2$. Let $A \in \R^{n \times n}$ be such that $\Norm{A}_2 \leq 1$. We get then
    \begin{align*}
        \Norm{P^{1}(A) - \op{PROD}_\varepsilon^2(A)}_2 & = \left\lVert \frac{A+\Eye_n}{2}
        \left(\left(\frac{A}{2}\right)^2 + \frac{\Eye_n}{4}\right) \right . \\
        & \left . - R \left ( \Phi^{\op{STR}_{n, \varrho}}_{\frac{\varepsilon}{4 n},1} \right )\left(\op{SQR}^1_\varepsilon(A) +\frac{\Eye_n}{4} \bigg| \frac{A+\Eye_n}{2} \right) \right \rVert_2 \\
        & \leq \Norm{\left ( \left(\frac{A}{2}\right)^2 - \op{SQR}_\varepsilon^1(A) \right)\left(\frac{A+\Eye_n}{2}\right)}_2 \\
        & + \left \lVert \left ( \op{SQR}_\varepsilon^1(A) + 2^{-2} \Eye_n \right ) \left ( \frac{A+\Eye_n}{2} \right )  \right . 
        \\ & \left . - R \left ( \Phi^{\op{STR}_{n, \varrho}}_{\frac{\varepsilon}{4 n}, 1} \right )\left(\op{SQR}^1_\varepsilon(A) + 2^{-2} \Eye_n \bigg| \frac{A+\Eye_n}{2} \right) \right \rVert_2 \\
        & \leq \frac{\varepsilon}{4} + \frac{\varepsilon}{4} \leq \varepsilon,
    \end{align*}
    where we have used that
    \[
    \Norm{\frac{A+\Eye_n}{2}}_2\leq 1,
    \]
    that
    \[
    \Norm{\op{SQR}^1_\varepsilon(A) + 2^{-2} \Eye_n}_2 \leq 1,
    \]
    as well as \eqref{eq:sqrepsdefalt},  \eqref{eq: SQR N times MNN N times square} and \eqref{eq: infinity and 2 norm equivalence constants}.
    
    Let us suppose that the result \eqref{eq: MNN AUX approximates Neu except by constant} is true for $i \geq 2$ and prove it for $i + 1$. Using \eqref{eq: PN norm 2 upper bound} and the inductive hypothesis \eqref{eq: MNN AUX approximates Neu except by constant}, we get that
    \[
    \Norm{\op{PROD}^i_\varepsilon(A)}_2 \leq \varepsilon + \Norm{P^i(A)}_2 \leq \varepsilon + \frac{1}{2} < 1.
    \]
    Consequently, using \eqref{eq: infinity and 2 norm equivalence constants} and \eqref{eq: SQR norm 2 upper bound} we get:
    \begin{equation*}
        \left \lVert \left (\op{SQR}_\varepsilon^i(A) + 2^{-2^i} \Eye_n \right )\op{PROD}_\varepsilon^i(A) - R \left ( \Phi_{\frac{\varepsilon}{4 n}, 1}^{\op{STR}_{n, \varrho}} \right ) \left ( \op{SQR}_\varepsilon^i(A) + 2^{-2^i} \Eye_n \middle | \op{PROD}_\varepsilon^i(A) \right ) \right \rVert_2 \leq \frac{\varepsilon}{4}.
    \end{equation*}
    We conclude then
    \begin{align*}
        & \Norm{P^i(A) - \op{PROD}_\varepsilon^{i + 1}(A)}_2 = \left \lVert \left ( 2^{-2^i} \Eye_n + \left(\frac{A}{2}\right)^{2^i} \right ) P^{i - 1}(A) \right . \\
        & \left .- R \left ( \Phi_{\frac{\varepsilon}{4 n}, 1}^{\op{STR}_{n, \varrho}} \right ) \left ( \op{SQR}_\varepsilon^i(A) + 2^{-2^i} \Eye_n \middle | \op{PROD}_\varepsilon^i(A) \right ) \right \rVert_2 \\
        & \leq \Norm{\left ( 2^{-2^i} \Eye_n + \left(\frac{A}{2}\right)^{2^i} \right ) P^{i - 1}(A) - \left ( \op{SQR}_\varepsilon^i(A) + 2^{-2^i} \Eye_n \right ) \op{PROD}_\varepsilon^i(A)}_2 + \frac{\varepsilon}{4} \\
        & \leq \Norm{\left ( \left(\frac{A}{2}\right)^{2^i} - \op{SQR}_\varepsilon^i(A) \right ) P^{i- 1}(A)}_2 \\
        & + \Norm{\left ( 2^{-2^i} \Eye_n + \op{SQR}^i_\varepsilon(A) \right ) \left ( P^{i - 1}(A) - \op{PROD}_\varepsilon^i(A) \right )}_2 + \frac{\varepsilon}{4} \\
        & \leq \varepsilon \left ( \Norm{P^{i - 1}(A)}_2 + \Norm{2^{-2^i} \Eye_n + \op{SQR}_\varepsilon^i(A)}_2 + \frac{1}{4} \right ) \\
        & \leq \varepsilon \left ( 2^{i - 2^i} + 2^{-2^i} + \varepsilon + 2^{-2^i} + \frac{1}{4} \right ) \leq \varepsilon,
    \end{align*}
    which implies \eqref{eq: MNN AUX approximates Neu except by constant}. This also implies \eqref{eq: NEU MNN Neumann series} because for all $A \in \R^{n \times n}$:
    \begin{equation*}
     R \left ( \Phi^{\op{NEU}^N_{n, \varrho}}_\varepsilon \right )(A) 
     = 2^{2^N - 1} R \left ( \Phi_{2^{1 - 2^N} \frac{\varepsilon}{4n}, 1}^{\op{STR}_{n, \varrho}} \right ) \left ( \op{SQR}^{N - 1}_{2^{1 - 2^N} \varepsilon}(A) + 2^{-2^{N - 1}} \Eye_n \middle | \op{PROD}_{2^{1 - 2^N} \varepsilon}^{N - 1} (A) \right ) 
    = \op{PROD}_\varepsilon^N(A).
    \end{equation*}
    where we used the definitions of $\Phi_\varepsilon^{\op{NEU}^N_{n, \varrho}}$, $\op{PROD}_{2^{- 2^N} \varepsilon}^N$, and $\Phi^{\op{FLIP}}_{n, N - 1}$.

    \textbf{Step 5:} Proof of \eqref{eq: NEU MNN weights} for $N\geq2$.
        We can rapidly deduce with \eqref{eq: number of weights and layers of auxiliary MNN upper bounds} that
    \begin{align*}
        M \left ( \Phi_\varepsilon^{\op{NEU}^N_{n, \varrho}} \right ) & = M \left ( \Phi_{2^{1-2^N}\frac{\varepsilon}{4 n}, 1}^{\op{\op{STR}_{n, \varrho}}} \right ) + M \left ( \Phi^{\op{FLIP}}_{n, N - 1} \right ) \\
        & + \sum_{k = 2}^{N - 1} \left [ M \left ( \cali{P} \left ( \Phi_{2^{1-2^N}\frac{\varepsilon}{4 n}, 1}^{\op{\op{STR}_{n, \varrho}}}, \Phi_{2^{1-2^N}\frac{\varepsilon}{4 n}, 1}^{\op{\op{STR}_{n, \varrho}}} \right ) \right ) + M \left ( \Phi_{n, k - 1}^{\op{MIX}} \right ) \right ] \\
        & + M \left ( \cali{P} \left ( \Phi_{2^{1-2^N}\frac{\varepsilon}{4 n}, 1}^{\op{\op{STR}_{n, \varrho}}}, \Phi_{n, L \left ( \Phi_{2^{1-2^N}\frac{\varepsilon}{4 n}, 1}^{\op{\op{STR}_{n, \varrho}}} \right )}^{\op{FILL}} \right ) \right ) + M \left ( \Phi^{\op{DUP}}_n \right ) \\
        & = 2(N - 1) M \left ( \Phi_{2^{-2^N}\frac{\varepsilon}{2 n}, 1}^{\op{\op{STR}_{n, \varrho}}} \right ) + 2 n^2 + n + (N - 2)(4 n^2 + n) \\
        & + n^2 L \left ( \Phi_{2^{-2^N}\frac{\varepsilon}{2 n}, 1}^{\op{\op{STR}_{n, \varrho}}} \right ) + n + 3 n^2.
        \end{align*}
        Thus, using \eqref{eq: MNN STR arbitrary square weights} and \eqref{eq: MNN STR arbitrary square layers}, we obtain that:  
        \begin{align*}
        M \left ( \Phi_\varepsilon^{\op{NEU}^N_{n, \varrho}} \right )
        & \leq 2(N - 1) \left [ 7 n^{\log_2 7} \left ( M \left ( \Pi_{2^{1 - 2^N} \frac{\varepsilon}{16 n^3}, 2n}^\varrho \right ) + 12 \right ) - 9 n^2 \right ] \\
        & + n^2 \left [ 4 N - 3 + L \left ( \Pi_{2^{1 - 2^N} \frac{\varepsilon}{16 n^3}, 2n}^\varrho \right ) + 2 (\log_2 n + 2) \right ] + n N \\
        & = 14 n^{\log_2 7} (N - 1) \left ( M \left ( \Pi_{2^{ - 2^N} \frac{\varepsilon}{8 n^3}, 2n}^\varrho \right ) + 12 \right ) \\
        & + n^2 \left ( L \left ( \Pi_{2^{ - 2^N} \frac{\varepsilon}{8 n^3}, 2n}^\varrho \right ) - 14 N + 19 + 2 \log_2 n\right ) + n N.
    \end{align*}
    
    \textbf{Step 6:} Proof of \eqref{eq: NEU MNN layers} for $N\geq2$.
    For the number of layers \eqref{eq: NEU MNN layers}, we can similarly to Step 5 get that
    \begin{equation*}
        L \left ( \Phi_\varepsilon^{\op{NEU}^N_{n, \varrho}} \right )  = N \left ( L \left ( \Phi_{2^{1-2^N}\frac{\varepsilon}{4 n}, 1}^{\op{\op{STR}_{n, \varrho}}} \right ) + 1 \right ) 
        \leq N \left [ 2\log_2 n + 5 + L \left ( \Pi_{2^{- 2^N} \frac{\varepsilon}{8 n^3}, 2 n}^\varrho \right ) \right ],
    \end{equation*}
    using again \eqref{eq: number of weights and layers of auxiliary MNN upper bounds} and \eqref{eq: MNN STR arbitrary square layers}.
\end{proof}

Let us now prove Theorem \ref{th: INV MNN}:
\begin{proof}
    We first focus on proving \eqref{eq: INV MNN inverse}. By Remark \ref{rm: inverse as Neumann sum} with $P=\alpha\Eye_n$, we know that the identity
    \[
    A^{-1} = \alpha \sum_{k = 0}^{\infty} (\Eye_n - \alpha A)^k
    \]
    holds for all $A \in \R^{n \times n}$ such that $\Norm{\Eye_n - \alpha A}_2 < 1$. By \eqref{eq: Neumann partial sum error}, we have that
    \begin{equation} \label{est:sumalphdelt}
        \Norm{A^{-1} - \alpha \sum_{k = 0}^{2^N-1} (\Eye_n - \alpha A)^k}_2  = \alpha \Norm{(\Eye_n-(\Eye_n-\alpha A))^{-1} - \sum_{k = 0}^{2^N-1} (\Eye_n - \alpha A)^k} 
\leq \alpha \frac{\delta^{2^N}}{1 - \delta}
    \end{equation}
    when 
    \[
    \Norm{\Eye_n - \alpha A}_2 \leq \delta \in (0, 1).
    \]
    Our plan is then to use the MNN in Lemma \ref{th: NEU MNN} to approximate matrix inversion. It is easy to check that for $N(\varepsilon,\delta)$ given by \eqref{def:nepsdel}
    we get the inequality:
    \begin{equation}\label{eq:deltNeps}
        \frac{\delta^{2^{N(\varepsilon,\delta)}}}{1 - \delta} \leq \varepsilon.
    \end{equation}
    Consider the shallow MNN $\Phi^{\op{IN}}_{n, \alpha}$ that satisfies:
    \[
    R \left ( \Phi^{\op{IN}}_{n,\alpha} \right ) (A) = \Eye_n - \alpha A
    \]
    for all $A \in \R^{n \times n}$ and 
    \begin{equation}\label{eq:wphiN}
        M \left ( \Phi^{\op{IN}}_n \right ) = n + n^2.
    \end{equation}
    If we define
    \[
    ((\LL^1, B^1, \alpha^1), \dots, (\LL^L, B^L, \alpha^L)) \coloneq \Phi_{\min \set{\frac{\varepsilon}{2 \alpha}, \frac{1}{8}}}^{\op{NEU}^{N\left ( \frac{\varepsilon}{2 \alpha}, \delta \right )}_{n, \varrho}},
    \]
    then our target MNN is 
    \[
    \Phi_{\varepsilon, \delta}^{\op{INV}^\alpha_{n, \varrho}} \coloneq ((\LL^1, B^1, \alpha^1), \dots, (\alpha \LL^L, \alpha B^L, \alpha^L)) \odot \Phi_{n, \alpha}^{\op{IN}}.
    \]
    Indeed, we get for all $A \in \R^{n \times n}$ such that $\Norm{\Eye_n - \alpha A}_2 \leq \delta$:
    \begin{align*}
        & \Norm{A^{-1} - R \left ( \Phi_{\varepsilon, \delta}^{\op{INV}^\alpha_{n, \varrho}} \right )(A)}_2 \\
        & \leq \Norm{A^{-1} - \alpha \sum^{2^{N\left ( \frac{\varepsilon}{2 \alpha}, \delta \right )} - 1}_{k = 0} (\Eye_n - \alpha A)^k}_2 \\
        & + \Norm{\alpha \sum^{2^{N\left ( \frac{\varepsilon}{2 \alpha}, \delta \right )} - 1}_{k = 0} (\Eye_n - \alpha A)^k - \alpha R \left ( \Phi_{\min \set{\frac{\varepsilon}{2 \alpha}, \frac{1}{8}}}^{\op{NEU}^{N\left ( \frac{\varepsilon}{2 \alpha}, \delta \right )}_{n, \varrho}} \right )(\Eye_n - \alpha A)}_2 \\
        & \leq \alpha \frac{\delta^{2^{N\left ( \frac{\varepsilon}{2 \alpha}, \delta \right )}}}{1 - \delta} + \alpha \min \set{\frac{\varepsilon}{2 \alpha}, \frac{1}{8}} \leq \varepsilon.
    \end{align*}
    In the second estimate, we have used \eqref{est:sumalphdelt} and \eqref{eq: NEU MNN Neumann series}, and in the third one \eqref{eq:deltNeps}. 

    We follow by proving the upper bound in the number of weights. For that, we consider that:
    \begin{equation}\label{eq:MInvMNeu}
    M \left ( \Phi_{\varepsilon, \delta}^{\op{INV}^\alpha_{n, \varrho}} \right )  = M \left ( \Phi_{\min \set{\frac{\varepsilon}{2 \alpha}, \frac{1}{8}}}^{\op{NEU}^{N\left ( \frac{\varepsilon}{2 \alpha}, \delta \right )}_{n, \varrho}} \right ) + M \left ( \Phi^{\op{IN}}_{n, \alpha} \right ).
    \end{equation}
    In fact, if $N\left ( \frac{\varepsilon}{2 \alpha}, \delta \right ) \geq 2$ we obtain \eqref{eq: INV MNN weights} from \eqref{eq:MInvMNeu}, \eqref{eq: NEU MNN weights} and \eqref{eq:wphiN}.  Similarly, if $N\left ( \frac{\varepsilon}{2 \alpha}, \delta \right ) = 1$,
    we obtain \eqref{eq: INV MNN weights N equals 1} 
    from \eqref{eq:MInvMNeu}, \eqref{eq: NEU MNN N equals 1 weights} and \eqref{eq:wphiN}.

    We finish by proving the upper bound in the number of layers, that is, by proving \eqref{eq: INV MNN layers} and \eqref{eq: INV MNN layers N equals 1}. For that, we only need to consider:
    \[L \left ( \Phi_{\varepsilon, \delta}^{\op{INV}^\alpha_{n, \varrho}} \right )  = L \left ( \Phi_{\min \set{\frac{\varepsilon}{2 \alpha}, \frac{1}{8}}}^{\op{NEU}^{N\left ( \frac{\varepsilon}{2 \alpha}, \delta \right )}_{n, \varrho}} \right ) +1,\]
    and the estimates \eqref{eq: NEU MNN layers} and \eqref{eq: NEU MNN N equals 1 layers} respectively.
\end{proof}

\bibliographystyle{abbrv} 
\bibliography{NNStrassen}

\begin{thebibliography}{10}

\bibitem{ali2020reduction}
M.~Ali, B.~Yin, A.~Kunar, A.~M. Sheikh, and H.~Bilal.
\newblock Reduction of multiplications in convolutional neural networks.
\newblock In {\em 2020 39th Chinese control conference (CCC)}, pages 7406--7411. IEEE, 2020.

\bibitem{allen2019convergence}
Z.~Allen-Zhu, Y.~Li, and Z.~Song.
\newblock {A Convergence Theory for Deep Learning via Over-Parameterization}.
\newblock In {\em International Conference on Machine Learning}, pages 242--252. PMLR, 2019.

\bibitem{alman2024asymmetryyieldsfastermatrix}
J.~Alman, R.~Duan, V.~V. Williams, Y.~Xu, Z.~Xu, and R.~Zhou.
\newblock More asymmetry yields faster matrix multiplication.
\newblock In {\em Proceedings of the 2025 Annual ACM-SIAM Symposium on Discrete Algorithms (SODA)}, pages 2005--2039. SIAM, 2025.

\bibitem{ambainis2015fast}
A.~Ambainis, Y.~Filmus, and F.~Le~Gall.
\newblock {Fast matrix multiplication: limitations of the Coppersmith-Winograd method}.
\newblock In {\em Proceedings of the forty-seventh annual ACM symposium on Theory of Computing}, pages 585--593, 2015.

\bibitem{barron1993universal}
A.~R. Barron.
\newblock Universal approximation bounds for superpositions of a sigmoidal function.
\newblock {\em IEEE Transactions on Information Theory}, 39(3):930--945, 1993.

\bibitem{chen1995universal}
T.~Chen and H.~Chen.
\newblock Universal approximation to nonlinear operators by neural networks with arbitrary activation functions and its application to dynamical systems.
\newblock {\em IEEE transactions on neural networks}, 6(4):911--917, 1995.

\bibitem{COPPERSMITH1990251}
D.~Coppersmith and S.~Winograd.
\newblock Matrix multiplication via arithmetic progressions.
\newblock {\em Journal of Symbolic Computation}, 9(3):251--280, 1990.
\newblock Computational algebraic complexity editorial.

\bibitem{cybenko1989approximation}
G.~Cybenko.
\newblock Approximation by superpositions of a sigmoidal function.
\newblock {\em Mathematics of Control, Signals and Systems}, 2(4), 1989.

\bibitem{dong2021local}
S.~Dong and Z.~Li.
\newblock {Local extreme learning machines and domain decomposition for solving linear and Nonlinear Partial Differential Equations}.
\newblock {\em Computer Methods in Applied Mechanics and Engineering}, 387:114129, 2021.

\bibitem{du2019gradient}
S.~Du, J.~Lee, H.~Li, L.~Wang, and X.~Zhai.
\newblock {Gradient Descent Finds Global Minima of Deep Neural Networks}.
\newblock In {\em International Conference on Machine Learning}, pages 1675--1685. PMLR, 2019.

\bibitem{han2017deep}
W.~E, J.~Han, and A.~Jentzen.
\newblock {Deep learning-based numerical methods for high-dimensional parabolic Partial Differential Equations and backward Stochastic Differential Equations}.
\newblock {\em Communications in Mathematics and Statistics}, 5(4):349--380, 2017.

\bibitem{elbrachter2022dnn}
D.~Elbr{\"a}chter, P.~Grohs, A.~Jentzen, and C.~Schwab.
\newblock {DNN expression rate analysis of high-dimensional PDEs: Application to option pricing}.
\newblock {\em Constructive Approximation}, 55(1):3--71, 2022.

\bibitem{elser2016network}
V.~Elser.
\newblock {A Network that learns Strassen multiplication}.
\newblock {\em Journal of Machine Learning Research}, 17(116):1--13, 2016.

\bibitem{evans2022partial}
L.~C. Evans.
\newblock {\em Partial Differential Equations}, volume~19.
\newblock American Mathematical Society, second edition, 2010.

\bibitem{fawzi2022discovering}
A.~Fawzi, M.~Balog, A.~Huang, T.~Hubert, B.~Romera-Paredes, M.~Barekatain, A.~Novikov, F.~J. R~Ruiz, J.~Schrittwieser, G.~Swirszcz, et~al.
\newblock {Discovering faster matrix multiplication algorithms with Reinforcement Learning}.
\newblock {\em Nature}, 610(7930):47--53, 2022.

\bibitem{gao2022physics}
H.~Gao, M.~J. Zahr, and J.-X. Wang.
\newblock {Physics-Informed Graph Neural Galerkin Networks: A unified framework for solving PDE-governed forward and inverse problems}.
\newblock {\em Computer Methods in Applied Mechanics and Engineering}, 390:114502, 2022.

\bibitem{garciauniversal}
C.~J. Garc{\i}a-Cervera, M.~Kessler, P.~Pedregal, and F.~Periago.
\newblock Universal approximation of set-valued maps and deeponet approximation of the controllability map.
\newblock {\em Researchgate}, 2024.

\bibitem{geist2021numerical}
M.~Geist, P.~Petersen, M.~Raslan, R.~Schneider, and G.~Kutyniok.
\newblock {Numerical solution of the parametric diffusion equation by Deep Neural Networks}.
\newblock {\em Journal of Scientific Computing}, 88(1):22, 2021.

\bibitem{gribonval2022approximation}
R.~Gribonval, G.~Kutyniok, M.~Nielsen, and F.~Voigtlaender.
\newblock {Approximation Spaces of Deep Neural Networks}.
\newblock {\em Constructive Approximation}, 55(1):259--367, 2022.

\bibitem{guhring2020error}
I.~G{\"u}hring, G.~Kutyniok, and P.~Petersen.
\newblock {Error bounds for approximations with Deep ReLU Neural Networks in $W^{s, p}$ norms}.
\newblock {\em Analysis and Applications}, 18(05):803--859, 2020.

\bibitem{guhring2021approximation}
I.~G{\"u}hring and M.~Raslan.
\newblock {Approximation rates for Neural Networks with encodable weights in smoothness spaces}.
\newblock {\em Neural Networks}, 134:107--130, 2021.

\bibitem{Guhring_Raslan_Kutyniok_2022}
I.~Gühring, M.~Raslan, and G.~Kutyniok.
\newblock {\em {Expressivity of Deep Neural Networks}}, page 149–199.
\newblock Cambridge University Press, 2022.

\bibitem{hanin2019universal}
B.~Hanin.
\newblock {Universal function approximation by Deep Neural Nets with bounded width and ReLU activations}.
\newblock {\em Mathematics}, 7(10):992, 2019.

\bibitem{he2018relu}
J.~He, L.~Li, J.~Xu, and C.~Zheng.
\newblock {ReLU Deep Neural Networks and linear finite elements}.
\newblock {\em Journal of Computational Mathematics}, 38(3):502--527, 2020.

\bibitem{JMLR:v24:23-0421}
C.~Heiß, I.~Gühring, and M.~Eigel.
\newblock {Multilevel CNNs for Parametric PDEs}.
\newblock {\em Journal of Machine Learning Research}, 24(373):1--42, 2023.

\bibitem{herrmann2024neural}
L.~Herrmann, C.~Schwab, and J.~Zech.
\newblock Neural and spectral operator surrogates: unified construction and expression rate bounds.
\newblock {\em Advances in Computational Mathematics}, 50(4):72, 2024.

\bibitem{hornik1991approximation}
K.~Hornik.
\newblock Approximation capabilities of multilayer feedforward networks.
\newblock {\em Neural Networks}, 4(2):251--257, 1991.

\bibitem{hornik1989multilayer}
K.~Hornik, M.~Stinchcombe, and H.~White.
\newblock {Multilayer Feedforward Networks are universal approximators}.
\newblock {\em Neural Networks}, 2(5):359--366, 1989.

\bibitem{hutzenthaler2020proof}
M.~Hutzenthaler, A.~Jentzen, T.~Kruse, and T.~A. Nguyen.
\newblock {A proof that rectified Deep Neural Networks overcome the curse of dimensionality in the numerical approximation of semilinear heat equations}.
\newblock {\em SN Partial Differential Equations and Applications}, 1(2):10, 2020.

\bibitem{kovachki2021universal}
N.~Kovachki, S.~Lanthaler, and S.~Mishra.
\newblock On universal approximation and error bounds for fourier neural operators.
\newblock {\em Journal of Machine Learning Research}, 22(290):1--76, 2021.

\bibitem{kutyniok2022theoretical}
G.~Kutyniok, P.~Petersen, M.~Raslan, and R.~Schneider.
\newblock {A Theoretical Analysis of Deep Neural Networks and Parametric PDEs}.
\newblock {\em Constructive Approximation}, 55(1):73--125, 2022.

\bibitem{lanthaler2022errorestimatesdeeponetsdeep}
S.~Lanthaler, S.~Mishra, and G.~E. Karniadakis.
\newblock {Error estimates for deeponets: A deep learning framework in infinite dimensions}.
\newblock {\em Transactions of Mathematics and Its Applications}, 6(1):1--141, 2022.

\bibitem{lee2020model}
K.~Lee and K.~T. Carlberg.
\newblock {Model reduction of dynamical systems on nonlinear manifolds using Deep Convolutional Autoencoders}.
\newblock {\em Journal of Computational Physics}, 404:108973, 2020.

\bibitem{leshno1993multilayer}
M.~Leshno, V.~Y. Lin, A.~Pinkus, and S.~Schocken.
\newblock Multilayer feedforward networks with a nonpolynomial activation function can approximate any function.
\newblock {\em Neural Networks}, 6(6):861--867, 1993.

\bibitem{li2020neural}
Z.~Li, N.~Kovachki, K.~Azizzadenesheli, B.~Liu, K.~Bhattacharya, A.~Stuart, and A.~Anandkumar.
\newblock Neural operator: Graph kernel network for partial differential equations.
\newblock {\em arXiv preprint arXiv:2003.03485}, 2020.

\bibitem{li2020fourier}
Z.~Li, N.~Kovachki, K.~Azizzadenesheli, B.~Liu, K.~Bhattacharya, A.~Stuart, and A.~Anandkumar.
\newblock Fourier neural operator for parametric partial differential equations.
\newblock In {\em International Conference on Learning Representations}, 2021.

\bibitem{lu2019deeponet}
L.~Lu, P.~Jin, G.~Pang, Z.~Zhang, and G.~E. Karniadakis.
\newblock {Learning nonlinear operators via DeepONet based on the universal approximation theorem of operators}.
\newblock {\em Nature Machine Intelligence}, 3(3):218--229, 2021.

\bibitem{mansour2024review}
Y.~Mansour, A.~Kaissar, and S.~Ansari.
\newblock {Review on Recent Matrix Multiplication Optimization Using Deep Learning}.
\newblock In {\em International Conference on Intelligent and Fuzzy Systems}, pages 359--371. Springer, 2024.

\bibitem{mhaskar1996neural}
H.~N. Mhaskar.
\newblock {Neural Networks for optimal approximation of smooth and analytic functions}.
\newblock {\em Neural Computation}, 8(1):164--177, 1996.

\bibitem{opschoor2020deep}
J.~A. Opschoor, P.~C. Petersen, and C.~Schwab.
\newblock {Deep ReLU Networks and high-order finite element methods}.
\newblock {\em Analysis and Applications}, 18(05):715--770, 2020.

\bibitem{petersen2018optimal}
P.~Petersen and F.~Voigtlaender.
\newblock {Optimal approximation of piecewise smooth functions using deep ReLU neural networks}.
\newblock {\em Neural Networks}, 108:296--330, 2018.

\bibitem{raissi2019physics}
M.~Raissi, P.~Perdikaris, and G.~E. Karniadakis.
\newblock {Physics-Informed Neural Networks: A Deep Learning framework for solving forward and inverse problems involving Nonlinear Partial Differential Equations}.
\newblock {\em Journal of Computational Physics}, 378:686--707, 2019.

\bibitem{rao2018winograd}
D.~D. Rao and K.~Ramana.
\newblock {Winograd’s inequality: effectiveness for efficient training of Deep Neural Networks}.
\newblock {\em International Journal of Intelligent Systems and Applications}, 6:49--58, 2018.

\bibitem{romera2024neuralnetworksnumericalanalysis}
G.~Romera.
\newblock {Neural Networks in Numerical Analysis and Approximation Theory}.
\newblock {\em arXiv:2410.02814}, 2024.

\bibitem{schwab2023deep}
C.~Schwab, A.~Stein, and J.~Zech.
\newblock Deep operator network approximation rates for lipschitz operators.
\newblock {\em Analysis and Applications}, 24(1):199--239, 2026.

\bibitem{siegel2023optimal}
J.~W. Siegel.
\newblock {Optimal approximation rates for Deep ReLU Neural Networks on Sobolev and Besov spaces}.
\newblock {\em Journal of Machine Learning Research}, 24(357):1--52, 2023.

\bibitem{sirignano2018dgm}
J.~Sirignano and K.~Spiliopoulos.
\newblock {DGM: A deep learning algorithm for solving partial differential equations}.
\newblock {\em Journal of Computational Physics}, 375:1339--1364, 2018.

\bibitem{strassen1969gaussian}
V.~Strassen.
\newblock Gaussian elimination is not optimal.
\newblock {\em Numerische Mathematik}, 13(4):354--356, 1969.

\bibitem{STRASSEN+1987+406+443}
V.~Strassen.
\newblock Relative bilinear complexity and matrix multiplication.
\newblock {\em Journal für die reine und angewandte Mathematik}, 1987(375-376):406--443, 1987.

\bibitem{yarotsky2017error}
D.~Yarotsky.
\newblock {Error bounds for approximations with Deep ReLU Networks}.
\newblock {\em Neural Networks}, 94:103--114, 2017.

\bibitem{zhang2021hierarchical}
L.~Zhang, L.~Cheng, H.~Li, J.~Gao, C.~Yu, R.~Domel, Y.~Yang, S.~Tang, and W.~K. Liu.
\newblock {Hierarchical Deep-Learning Neural Networks: finite elements and beyond}.
\newblock {\em Computational Mechanics}, 67:207--230, 2021.

\bibitem{zhao2018faster}
Y.~Zhao, D.~Wang, L.~Wang, and P.~Liu.
\newblock A faster algorithm for reducing the computational complexity of {Convolutional Neural Networks}.
\newblock {\em Algorithms}, 11(10):159, 2018.

\end{thebibliography}


\end{document}